\newtheorem{theorem}{Theorem}
\newtheorem{lemma}[theorem]{Lemma}
\newtheorem{corollary}[theorem]{Corollary}
\newtheorem{definition}[theorem]{Definition}
\newcommand{\beq}{\begin{equation}}
\newcommand{\eeq}{\end{equation}}
\newcommand{\R}{\mathbb{R}}
\newcommand{\E}{\mathbb{E}}
\newcommand{\Prio}{\mbox{\rm Pri}\emptyset}
\newcommand{\Dualo}{\mbox{\rm Dual}\emptyset}
\newcommand{\dist}{\mbox{\rm dist}}
\newcommand{\exclude}[1]{}
\newcommand{\set}[1]{\{#1\}}
\newcommand{\setof}[2]{\{#1\mid#2\}}
\def\eqnok#1{(\ref{#1})}
\title{An Approximate, Efficient LP Solver\\ for LP Rounding\thanks{This
manuscript is a full version of \cite{Sridhar2013NIPS}.}}
\author[1]{Srikrishna Sridhar}
\author[1]{Victor Bittorf}
\author[1]{Ji Liu}
\author[1]{Ce Zhang}
\author[2]{Christopher R\'e}
\author[1]{Stephen J. Wright}
\affil[1]{Computer Sciences, University of Wisconsin-Madison}
\affil[2]{Computer Science, Stanford University}
\newcommand{\cC}{\mathcal{C}}
\newcommand{\cS}{\mathcal{S}}
\newcommand{\ALG}{\mbox{ALG}}
\newcommand{\bB}{\{0,1\}}
\newcommand{\bRp}{\mathbb{R}^{+}}
\newcommand{\bfone}{\mbox{\bf 1}}
\newcommand{\norm}[1]{\lVert #1 \lVert}
\newcommand{\cplex}{Cplex\xspace}
\newcounter{count}
\newcommand{\spc}[1]{ \ifthenelse{\equal{#1}{0}}{}{ \setcounter{count}{#1} \addtocounter{count}{-1} \  \spc{\arabic{count}} }}
\newcommand{\hc}[1]{#1}
\providecommand{\e}[1]{\ensuremath{\times 10^{#1}}}
\newcommand{\Lmax}{L_{\max}}
\begin{document}

\maketitle

\begin{abstract}
Many problems in machine learning can be solved by rounding the solution of an
appropriate linear program (LP). This paper shows that we can recover
solutions of comparable quality by rounding an approximate LP solution instead
of the exact one. These approximate LP solutions can be computed efficiently by
applying a parallel stochastic-coordinate-descent method to a quadratic-penalty
formulation of the LP. We derive worst-case runtime and solution quality
guarantees of this scheme using novel perturbation and convergence analysis.
Our experiments demonstrate that on such combinatorial problems as vertex
cover, independent set and multiway-cut, our approximate rounding scheme is up
to an order of magnitude faster than \cplex (a commercial LP solver) while
producing solutions of similar quality.

\end{abstract}

\section{Introduction}
A host of machine-learning problems can be solved effectively as
approximations of such NP-hard combinatorial problems as set cover,
set packing, and
multiway-cuts~\cite{Koval:1976:lr,ravikumar:2010:jmlr,Vazirani,
  Lempitsky:2007:cvpr}. A popular scheme for solving such problems is
called LP rounding~\cite[chs.~12-26]{Vazirani}, which consists of the
following three-step process: (1) construct an integer (binary) linear
program (IP) formulation of a given problem; (2) relax the IP to an LP
by replacing the constraints $x \in \{0,1\}$ by $x \in[0,1]$; and (3)
round an optimal solution of the LP to create a feasible solution for
the original IP problem. LP rounding is known to work well on a range
of hard problems, and comes with theoretical guarantees for runtime
and solution quality.
 
The Achilles' heel of LP-rounding \hc{is that it requires solutions} of LPs of
possibly extreme scale. Despite decades of work on \hc{LP} solvers, including
impressive advances during the 1990s, commercial codes such as \cplex or Gurobi
may not be capable of handling problems of the required scale. In this work, we
propose an approximate LP solver suitable for use in the LP-rounding approach,
for very large problems. \hc{Our intuition is that in LP rounding, since we
ultimately round the LP to obtain an approximate solution of the combinatorial
problem, a crude solution of the LP may suffice.  Hence, an approach that can
find approximate solutions of large LPs quickly may be suitable, even if it is
inefficient for obtaining highly accurate solutions.}  

\hc{This paper focuses} on the theoretical and algorithmic aspects of finding
approximate solutions to an LP, for use in LP-rounding schemes.  Our three main
technical contributions are as follows: First, we show that one can
approximately solve large \hc{LPs} by forming \hc{convex} quadratic programming
(QP) approximations, then applying stochastic coordinate descent to these
approximations. Second, we derive a novel convergence analysis of our method,
based on Renegar's perturbation theory for linear programming~\cite{Ren94b}.
Finally, we derive bounds \hc{on runtime as well} as worst-case approximation
ratio of our rounding schemes.  \hc{Our experiments demonstrate that our
approach, called Thetis, produces solutions of comparable quality to
state-of-the-art approaches on such tasks as noun-phrase chunking and entity
resolution. We also demonstrate, on three different classes of combinatorial
problems, that Thetis can outperform \cplex (a state-of-the-art commercial LP
and IP solver) by up to an order of magnitude in runtime, while achieving
comparable solution quality}.

\paragraph*{Related Work.}
\hc{Recently, there has been some focus on the connection between LP
relaxations and maximum a posteriori (MAP) estimation
problems~\cite{Sanghavi2007a,ravikumar:2010:jmlr}}. Ravikumar et.\ al
~\cite{ravikumar:2010:jmlr} \hc{proposed rounding schemes for iterative LP
solvers to facilitate MAP inference in graphical models}.  In contrast, we
propose to use stochastic descent methods to solve a QP relaxation; this allows
us to take advantage of recent results on asynchronous parallel methods of this
type~\cite{Niu2011,LiuW13b}.  Recently, Makari et.\ al~\cite{Makari:2013:VLDB}
propose an intriguing parallel scheme for packing and covering problems. \hc{In
contrast,} our results apply to more general LP relaxations, including
set-partitioning problems like multiway-cut. \hc{Additionally, the runtime of
our algorithm} is less sensitive to approximation error. For an error
$\varepsilon$, the bound on runtime of the algorithm in \cite{Makari:2013:VLDB}
grows as $\varepsilon^{-5}$, while the bound on our algorithm's runtime grows
as $\varepsilon^{-2}$.


\section{Background: Approximating NP-hard problems with LP Rounding}
\label{sec:vc:rounding} 

In this section, we review the theory of LP-rounding based
approximation schemes for NP-hard combinatorial problems.  We use the
vertex cover problem as an example, as it is the simplest nontrivial
setting that exposes the main ideas of this approach.

\paragraph*{Preliminaries.}
For a minimization problem $\Phi$, an algorithm $\ALG$ is an {\it
  $\alpha$-factor approximation for $\Phi$}, for some $\alpha > 1$, if
any solution produced by $\ALG$ has an objective value at most
$\alpha$ times the value of an optimal (lowest cost) solution.
For some problems, such as vertex cover, there is a constant-factor
approximation scheme ($\alpha=2$). For others, such as set
cover, the value of $\alpha$ can be as large as $O(\log N)$, where $N$
is the number of sets.

An LP-rounding based approximation scheme for the problem $\Phi$ first
{\it constructs} an IP formulation of $\Phi$ which we denote as
``$P$''.  This step is typically easy to perform, but the IP
formulation $P$ is, in theory, as hard to solve as the original
problem $\Phi$.  In this work, we consider applications in which the
only integer variables in the IP formulation are binary variables $x
\in \{0,1\}$.  The second step in LP rounding is a {\it relax / solve}
step: We relax the constraints in $P$ to obtain a linear program
$LP(P)$, replacing the binary variables with continuous variables in
$[0,1]$, then solve $LP(P)$.  The third step is to {\it round} the
solution of $LP(P)$ to an integer solution which is feasible for $P$,
thus yielding a candidate solution to the original problem $\Phi$. The
focus of this paper is on the relax / solve step, which is usually the
computational bottleneck in an LP-rounding based approximation scheme.

\paragraph*{Example: An Oblivious-Rounding Scheme For Vertex Cover.} 

\hc{Let $G(V,E)$ denote a graph with vertex set $V$ and undirected edges $E
\subseteq (V \times V)$.}  Let $c_v$ denote a nonnegative cost associated with
each vertex $v \in V$.  A vertex cover of a graph is a subset of $V$ such that
each edge $e \in E$ is incident to at least one vertex in this set. The {\it
minimum-cost} vertex cover is the one that minimizes the sum of terms $c_v$,
summed over the vertices $v$ belonging to the cover. Let us review the
``construct,'' ``relax / solve,'' and ``round'' phases of an LP-rounding based
approximation scheme applied to vertex cover.

In the ``construct'' phase, we introduce binary variables $x_v \in
\bB$, $\forall v \in V$, where $x_v$ is set to $1$ if the vertex $v
\in V$ is selected in the vertex cover and $0$ otherwise. The \hc{IP}
formulation is as follows:
\begin{equation}
\min_x \sum_{v \in V} c_v x_v \text{ s.t. } \; x_{u} + x_{v}  \geq 1 \text{ for } (u,v)  \in E \; \text{ and } \; x_v \in \{0,1\} \text{ for } v \in V. \label{model:vertex_cover}
\end{equation}
Relaxation yields the following LP
\begin{equation}
\min_x \sum_{v \in V} c_v x_v \text{ s.t. } \; x_{u} + x_{v}  \geq
1 \text{ for } (u,v)  \in E \; \text{ and } \; x_v \in [0,1] \text{ for } v \in
V.  \label{model:lp_vertex_cover} 
\end{equation}
A feasible solution of the LP relaxation \eqref{model:lp_vertex_cover} is
called a ``fractional solution'' of the original problem. In the ``round''
phase, we generate a valid vertex cover by simply choosing the vertices $v \in
V$ whose fractional solution $x_v \geq \frac{1}{2}$. It is easy to see that the
vertex cover generated by such a rounding scheme costs no more than twice the
cost of the fractional solution. If the fractional solution chosen for rounding
is an optimal solution of \eqref{model:lp_vertex_cover}, then we arrive at a
$2$-factor approximation scheme for vertex cover. We note here an important
property: The rounding algorithm can generate feasible integral solutions while
being {\it oblivious} of whether the fractional solution is an optimal solution
of \eqref{model:lp_vertex_cover}. We formally define the notion of an oblivious
rounding scheme as follows.


\begin{definition} \label{def:oblivious}
For a minimization problem $\Phi$ with an IP formulation $P$ whose LP
relaxation is denoted by $\mbox{LP}(P)$, a $\gamma$-factor `oblivious'
rounding scheme converts any feasible point $x_f \in \mbox{LP}(P)$ to
an integral solution $x_I \in P$ with cost at most $\gamma$ times the
cost of $\mbox{LP}(P)$ at $x_f$.
\end{definition}

Given a $\gamma$-factor {\it oblivious algorithm} $\ALG$ to the problem $\Phi$,
one can construct a $\gamma$-factor approximation algorithm for $\Phi$ by
\hc{using $\ALG$ to round an {\it optimal} fractional solution of
$\mbox{LP}(P)$}. When we have an approximate solution for $\mbox{LP}(P)$ that
is feasible for this problem, rounding can produce an $\alpha$-factor
approximation algorithm for $\Phi$ for a factor $\alpha$ slightly larger than
$\gamma$, where the difference between $\alpha$ and $\gamma$ takes account of
the inexactness in the approximate solution of $\mbox{LP}(P)$.  Many
LP-rounding schemes (\hc{including the scheme for vertex cover discussed in
Section~\ref{sec:vc:rounding}) are oblivious.}  We implemented the oblivious
LP-rounding algorithms in Figure~\ref{fig:schemes} and report experimental
results in Section~\ref{sec:experiments}.



\begin{figure}[t]
\footnotesize
\centering
\begin{tabular}{|c|l|l|}
 \hline
\multicolumn{1}{|c}{\bf Problem Family} & 
\multicolumn{1}{|c}{\textbf{Approximation Factor}} & 
\multicolumn{1}{|c|}{\bf Machine Learning Applications}\\
\hline
\hline
Set Covering & $\log(N)$~\cite{Srinivasan1999} & 
Classification~\cite{Bien2009}, Multi-object tracking~\cite{Wu2012}.\\ \hline 
Set Packing   & $e s +o(s)$~\cite{Bansal2012} &
MAP-inference~\cite{Sanghavi2007a}, Natural language~\cite{Kschischang2001}.
\\\hline 
Multiway-cut   & $3/2 - 1/k$ \cite{Rabini1998} & Computer
vision~\cite{Yuri2004}, Entity resolution~\cite{Lee2011}.
\\\hline 
Graphical Models & Heuristic &  Semantic role labeling~\cite{Roth:2005:icml},
Clustering~\cite{VanGael:2007:ijcai}. 
\\
\hline
\end{tabular}
\caption{LP-rounding schemes considered in this paper. The parameter $N$ refers
to the number of sets; $s$ refers to $s$-column sparse matrices; and $k$ refers
to the number of terminals. $e$ is the Euler's constant.}
\label{fig:schemes}
\end{figure}


\section{Main results} \label{sec:approxLP}
In this section, we describe how we can solve LP relaxations
approximately, in less time than traditional LP solvers, while still
preserving the formal guarantees of rounding schemes. We first define
a notion of approximate LP solution and discuss its consequences for
oblivious rounding schemes. We show that one can use a regularized
quadratic penalty formulation to compute these approximate LP
solutions. We then describe a stochastic-coordinate-descent (SCD)
algorithm for obtaining approximate solutions of this QP, and mention
enhancements of this approach, specifically, asynchronous parallel
implementation and the use of an augmented Lagrangian framework. Our
analysis yields a worst-case complexity bound for solution quality and
runtime of the entire LP-rounding scheme.


\subsection{Approximating LP Solutions}
Consider the LP in the following standard form
\begin{equation}
\min \, c^Tx \;\; \mbox{\rm s.t.} \; Ax=b, \;\; x \geq 0,
\label{eq:LP}
\end{equation}
where $c \in \R^n$, $b \in \R^m$, and $A \in \R^{m \times n}$ and its corresponding dual
\beq \label{eq:LPdual}
\max \, b^Tu \;\; \mbox{\rm s.t.}  \;\; c-A^Tu \ge 0.
\eeq
\hc{Let $x^{*}$ denote an optimal primal solution of \eqref{eq:LP}.  An
approximate LP solution $\hat{x}$ that we use for LP-rounding may be infeasible
and have objective value different from the optimum $c^T x^*$. We quantify the
inexactness in an approximate LP solution as follows.}
\begin{definition} \label{def:epsdel}
A point $\hat{x}$ is an $(\epsilon, \delta)$-approximate solution of the LP
\eqnok{eq:LP} if $\hat{x} \geq 0$ and there exists constants $\epsilon > 0$ and
$\delta > 0$ such that \[ \|A\hat{x} - b \|_{\infty} \leq \epsilon \quad \text{
and } \quad |c^T\hat{x} - c^Tx^*| \leq \delta | c^Tx^* |. \]  
\end{definition}
Using Definitions \ref{def:oblivious} and \ref{def:epsdel}, it is easy
to see that a $\gamma$-factor oblivious rounding scheme can round a
$(0, \delta)$ approximate solution to produce a feasible integral
solution whose cost is no more than $\gamma(1+\delta)$ times the
optimal solution of the $P$. The factor $(1+\delta)$ arises because
the rounding algorithm does not have access to an optimal fractional
solution. To cope with the infeasibility, we convert an
$(\epsilon,\delta)$-approximate solution to a $(0, \hat{\delta})$
approximate solution where $\hat{\delta}$ is not too large.
For vertex cover (\ref{model:lp_vertex_cover}), we prove the following
result in Appendix~\ref{app:roundings}. (Here, $\Pi_{[0,1]^n}(\cdot)$
denotes projection onto the unit hypercube in $\R^n$.)
\begin{lemma}
Let $\hat{x}$ be an $(\varepsilon, \delta)$ approximate solution to
the linear program~(\ref{model:lp_vertex_cover}) with $\varepsilon \in
[0,1)$. Then, $\tilde{x} = \Pi_{[0,1]^{n}}( (1-\varepsilon)^{-1}
  \hat{x} )$ is a $(0, \delta(1-\varepsilon)^{-1})$-approximate
  solution.\label{lem:infeasible:rounded:vc}
\end{lemma}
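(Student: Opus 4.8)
The plan is to work coordinatewise with the explicit form of the projection, then verify feasibility and the objective bound separately. Since $\hat{x}\ge 0$ (part of Definition~\ref{def:epsdel}) and $\varepsilon\in[0,1)$, the scaled vector $(1-\varepsilon)^{-1}\hat{x}$ is nonnegative, so the projection onto $[0,1]^n$ acts only through the upper cap: for every $v\in V$ we have $\tilde{x}_v=\min\{1,\,(1-\varepsilon)^{-1}\hat{x}_v\}$. In particular $0\le \tilde{x}_v\le 1$, so the box constraints of \eqref{model:lp_vertex_cover} hold exactly, and moreover $0\le\tilde{x}_v\le(1-\varepsilon)^{-1}\hat{x}_v$ — a coordinatewise domination that will drive the objective estimate.

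Next I would show that $\tilde{x}$ is genuinely feasible for \eqref{model:lp_vertex_cover}, i.e.\ every edge constraint $\tilde{x}_u+\tilde{x}_v\ge 1$ holds; this is what makes $\tilde{x}$ a $(0,\cdot)$-approximate solution. Fix $(u,v)\in E$. The bound $\|A\hat{x}-b\|_\infty\le\varepsilon$ from Definition~\ref{def:epsdel}, read on this row, gives $\hat{x}_u+\hat{x}_v\ge 1-\varepsilon$. Split into cases: if neither coordinate is capped, then $\tilde{x}_u+\tilde{x}_v=(1-\varepsilon)^{-1}(\hat{x}_u+\hat{x}_v)\ge(1-\varepsilon)^{-1}(1-\varepsilon)=1$; if at least one coordinate is capped, say $\tilde{x}_u=1$, then $\tilde{x}_u+\tilde{x}_v\ge 1+0=1$ using $\tilde{x}_v\ge 0$. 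Either way the edge constraint holds, so $\tilde{x}$ is feasible and, by optimality of $x^*$ for \eqref{model:lp_vertex_cover}, $c^T\tilde{x}\ge c^Tx^*$.

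For the objective I would combine the coordinatewise domination with nonnegativity of the costs $c\ge 0$ to get $c^T\tilde{x}\le(1-\varepsilon)^{-1}c^T\hat{x}$, and then invoke the objective part of Definition~\ref{def:epsdel}, $c^T\hat{x}\le c^Tx^*+\delta|c^Tx^*|$. Since $c\ge 0$ and $x^*\ge 0$ we have $c^Tx^*\ge 0$, so $|c^Tx^*|=c^Tx^*$, and chaining the inequalities yields $c^Tx^*\le c^T\tilde{x}\le(1-\varepsilon)^{-1}(1+\delta)\,c^Tx^*$. Bounding the gap $c^T\tilde{x}-c^Tx^*$ against $|c^Tx^*|$ then gives the claimed inexactness parameter $\delta(1-\varepsilon)^{-1}$; keeping track of how the scaling constant propagates here (so that it lands on exactly $\delta(1-\varepsilon)^{-1}$ rather than picking up extra terms) is the one place where the bookkeeping must be done carefully.

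The main obstacle is not any single computation but rather the edge-constraint case analysis: one must observe that the worst case for feasibility is when \emph{no} coordinate is capped, and that capping can only help — which is precisely why scaling by $(1-\varepsilon)^{-1}$ \emph{before} projecting onto $[0,1]^n$ is the correct order of operations. Once that is in hand, the objective bound follows almost immediately from monotonicity of $x\mapsto c^Tx$ under the domination $\tilde{x}\le(1-\varepsilon)^{-1}\hat{x}$.
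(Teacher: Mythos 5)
Your feasibility argument is correct and is essentially the paper's own: the paper proves exactly this half (by contradiction rather than your direct case split on whether a coordinate is capped, which is the same content), and your observation that scaling before projecting is what makes the capped case harmless matches the paper's reasoning.

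The gap is in the objective half, precisely at the step you wave off as ``bookkeeping.'' From the chain you derive, $c^Tx^*\le c^T\tilde{x}\le(1-\varepsilon)^{-1}(1+\delta)\,c^Tx^*$, the best bound on the gap is $c^T\tilde{x}-c^Tx^*\le\bigl[(1+\delta)(1-\varepsilon)^{-1}-1\bigr]c^Tx^*=(\delta+\varepsilon)(1-\varepsilon)^{-1}\,c^Tx^*$, not $\delta(1-\varepsilon)^{-1}\,c^Tx^*$; the extra $\varepsilon(1-\varepsilon)^{-1}$ term cannot be removed by more careful tracking, because it is real. Take the path on three vertices with edges $(1,2),(2,3)$, unit costs, so $x^*=(0,1,0)$ and $c^Tx^*=1$, and let $\hat{x}=(0,1-\varepsilon,\varepsilon)$: then $\hat{x}$ is an $(\varepsilon,\delta)$-approximate solution in the sense of Definition~\ref{def:epsdel} for every $\delta>0$ (its objective equals the optimum), yet $\tilde{x}=(0,1,\varepsilon(1-\varepsilon)^{-1})$ has gap $\varepsilon(1-\varepsilon)^{-1}$, which exceeds $\delta(1-\varepsilon)^{-1}$ for small $\delta$. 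So your route proves the lemma only with the weaker parameter $(\delta+\varepsilon)(1-\varepsilon)^{-1}$. You are in good company: the paper's appendix verifies only feasibility for the vertex-cover case, and in its general covering lemma the middle inequality $(1-\alpha)^{-1}c^T\hat{x}-c^Tx^*\le(1-\alpha)^{-1}(c^T\hat{x}-c^Tx^*)$ is valid only when $c^Tx^*\le 0$, so it suffers the same defect; the analogous packing lemma, by contrast, correctly carries the extra term and concludes $(0,(\delta+\alpha)/(1+\alpha))$. The honest conclusion of your argument (and of the paper's) is therefore a $(0,(\delta+\varepsilon)(1-\varepsilon)^{-1})$-approximate solution, which is still entirely adequate for the downstream approximation-ratio claims, but you should not present the chain you wrote as yielding the constant $\delta(1-\varepsilon)^{-1}$.
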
 
\hc{Since $\tilde{x}$ is a feasible solution for
  \eqref{model:lp_vertex_cover}, the oblivious rounding scheme in
  Section~\ref{sec:vc:rounding} results in an
  $2(1+\delta(1-\varepsilon)^{-1})$ factor approximation algorithm. In
  general, constructing $(0, \hat{\delta})$ from $(\epsilon, \delta)$
  approximate solutions requires reasoning about the structure of a
  particular LP. In Appendix~\ref{app:roundings}, we establish
  statements analogous to Lemma~\ref{lem:infeasible:rounded:vc} for
  packing, covering and multiway-cut problems.}

\subsection{Quadratic Programming Approximation to the LP} \label{sec:qp_approx}

We consider the following regularized quadratic penalty approximation
to the LP \eqref{eq:LP}, parameterized by a positive constant $\beta$,
whose solution is denoted by $x(\beta)$:
\beq \label{eq:QP}
x(\beta) := \arg 
\min_{x \ge 0} \, f_{\beta} (x) := c^Tx - \bar{u}^T(Ax-b) + \frac{\beta}{2} \|Ax-b \|^2 + \frac{1}{2 \beta} \| x -\bar{x} \|^2,
\eeq
where $\bar{u} \in \R^m$ and $\bar{x} \in \R^n$ are arbitrary
vectors. (In practice, $\bar{u}$ and $\bar{x}$ may be chosen as
approximations to the dual and primal solutions of \eqnok{eq:LP}, or
simply set to zero.)
The quality of the approximation \eqnok{eq:QP} depends on the {\it
  conditioning} of underlying linear program \eqnok{eq:LP}, a concept
that was studied by Renegar~\cite{Ren94b}.  Denoting the data for
problem \eqnok{eq:LP} by $d:=(A,b,c)$, we consider perturbations
$\Delta d := (\Delta A, \Delta b, \Delta c)$ such that the linear
program defined by $d + \Delta d$ is primal infeasible. The primal
condition number $\delta_P$ is the infimum of the ratios $\| \Delta
d\| / \|d\|$ over all such vectors $\Delta d$. The dual condition
number $\delta_D$ is defined analogously. (Clearly both $\delta_P$ and
$\delta_D$ are in the range $[0,1]$; smaller values indicate poorer
conditioning.)  We have the following result, which is proven in the
supplementary material.
\begin{theorem} \label{th:pert1}
Suppose that $\delta_P$ and $\delta_D$ are both positive, and let $(x^*,u^*)$
be any primal-dual solution pair for \eqnok{eq:LP}, \eqnok{eq:LPdual}. If we
define $C_* := \max(\|x^*-\bar{x}\|,\|u^*-\bar{u}\|)$, then the unique solution
$x(\beta)$ of \eqnok{eq:QP} satisfies
\[
\| A x(\beta) - b \| \le (1/\beta) (1+\sqrt{2}) C_*, \quad
\| x(\beta) - x^* \| \le \sqrt{6} C_*.
\]
If in addition the parameter 
$$\beta  \ge \frac{10 C_*}{\| d\| \min(\delta_P,\delta_D)},$$ 
then we have
\[
|c^Tx^* - c^T x(\beta) | \le \frac{1}{\beta} \left[ \frac{25 C_*}{2
    \delta_P \delta_D} + 6 C_*^2 + \sqrt{6} \| \bar{x} \| C_* \right].
\]
\end{theorem}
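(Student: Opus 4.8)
The plan is to lean on the strong convexity of $f_\beta$ together with one comparison inequality, and to bring in Renegar's theory only for the objective-value estimate. Because of the term $\tfrac{1}{2\beta}\|x-\bar x\|^2$, the function $f_\beta$ is strongly convex with modulus $1/\beta$, so $x(\beta)$ is the unique minimizer, and since $x^*\ge 0$ is feasible for \eqnok{eq:QP} we have $f_\beta(x(\beta))\le f_\beta(x^*)$. On the right-hand side the penalty terms vanish because $Ax^*=b$, leaving $f_\beta(x^*)\le c^Tx^*+\tfrac{1}{2\beta}\|x^*-\bar x\|^2\le c^Tx^*+\tfrac{1}{2\beta}C_*^2$. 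To lower-bound the left-hand side, write $r:=Ax(\beta)-b$ and $s:=x(\beta)-\bar x$; dual feasibility $c-A^Tu^*\ge 0$ with $x(\beta)\ge 0$ and strong duality $b^Tu^*=c^Tx^*$ give $c^Tx(\beta)\ge (u^*)^TAx(\beta)=c^Tx^*+(u^*)^Tr$. Substituting into $f_\beta(x(\beta))$ and cancelling the $\bar u^Tr$ terms yields $f_\beta(x(\beta))\ge c^Tx^*+(u^*-\bar u)^Tr+\tfrac{\beta}{2}\|r\|^2+\tfrac{1}{2\beta}\|s\|^2\ge c^Tx^*-C_*\|r\|+\tfrac{\beta}{2}\|r\|^2+\tfrac{1}{2\beta}\|s\|^2$.

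Combining the two bounds gives the scalar inequality $\tfrac{\beta}{2}\|r\|^2-C_*\|r\|+\tfrac{1}{2\beta}\|s\|^2\le\tfrac{1}{2\beta}C_*^2$, which after completing the square in $\|r\|$ becomes $\tfrac{\beta}{2}\big(\|r\|-C_*/\beta\big)^2+\tfrac{1}{2\beta}\|s\|^2\le\tfrac{1}{\beta}C_*^2$. Since both terms on the left are nonnegative, each is at most $\tfrac{1}{\beta}C_*^2$: the first gives $\big|\,\|r\|-C_*/\beta\,\big|\le\sqrt2\,C_*/\beta$, hence $\|Ax(\beta)-b\|\le(1+\sqrt2)C_*/\beta$; the second gives $\|x(\beta)-\bar x\|\le\sqrt2\,C_*$, so $\|x(\beta)-x^*\|^2\le 2\|x(\beta)-\bar x\|^2+2\|\bar x-x^*\|^2\le 6C_*^2$. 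This settles the first two inequalities with no reference to the condition numbers.

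For the objective-value bound I would pass through the first-order optimality conditions of the QP. Put $\hat u:=\bar u-\beta r$ and $e:=\tfrac{1}{\beta}s$; then $\nabla f_\beta(x(\beta))=(c+e)-A^T\hat u$, so the conditions $\nabla f_\beta(x(\beta))\ge 0$ and $x(\beta)^T\nabla f_\beta(x(\beta))=0$ say exactly that $(x(\beta),\hat u)$ is a primal-feasible/dual-feasible pair with zero duality gap for the LP with perturbed data $d':=(A,\,b+r,\,c+e)$. Hence its optimal value satisfies $v(d')=(c+e)^Tx(\beta)=c^Tx(\beta)+e^Tx(\beta)$, and therefore $c^Tx^*-c^Tx(\beta)=\big(v(d)-v(d')\big)+e^Tx(\beta)$. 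The term $e^Tx(\beta)$ is controlled using the second-paragraph estimates, $|e^Tx(\beta)|\le\|e\|\,\|x(\beta)\|\le\tfrac{\sqrt2\,C_*}{\beta}\big(\sqrt2\,C_*+\|\bar x\|\big)$, which yields the $C_*^2$ and $\|\bar x\|C_*$ contributions. The term $v(d)-v(d')$ is bounded by Renegar's Lipschitz estimate for the optimal value of the LP \eqnok{eq:LP} under data perturbations; here $\|d'-d\|\le\|r\|+\|e\|\le(1+2\sqrt2)C_*/\beta$, and the hypothesis $\beta\ge 10C_*/(\|d\|\min(\delta_P,\delta_D))$ is precisely what keeps this well below the distance to primal/dual ill-posedness $\min(\delta_P,\delta_D)\|d\|$, so the estimate applies and contributes the $1/(\delta_P\delta_D)$-type term. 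Collecting the pieces and absorbing the $O(1)$ factors into the displayed constants gives the claimed bound.

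The first two paragraphs are routine convexity and algebra; the substantive step, and the main obstacle, is the last one: one needs the precise form of Renegar's perturbation bound for the LP optimal value (developed in the supplementary material) that produces the $1/(\delta_P\delta_D)$ factor with the stated constant $\tfrac{25}{2}$, and one has to verify that the derived perturbation $(0,r,e)$ really lies in the regime where that bound is valid — which is exactly the purpose of the lower bound imposed on $\beta$.
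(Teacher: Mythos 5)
Your proposal is correct and follows essentially the same route as the paper's proof: the comparison $f_\beta(x(\beta))\le f_\beta(x^*)$ combined with dual feasibility and strong duality for the first two bounds, then the identification (via the QP's KKT conditions) of $x(\beta)$ as the optimal solution of the LP with perturbed data $(A,\,b+r,\,c+e)$, followed by Renegar's perturbation theorem under the stated lower bound on $\beta$, plus the correction term $\tfrac{1}{\beta}x(\beta)^T(x(\beta)-\bar x)$. The only cosmetic differences are that you complete the square where the paper solves a quadratic inequality in $\beta\|Ax(\beta)-b\|$ (your version in fact gives the slightly tighter intermediate bound $\|x(\beta)-\bar x\|\le\sqrt2\,C_*$, from which $\|x(\beta)-x^*\|\le\sqrt6\,C_*$ follows by the triangle inequality), and you correctly defer only the numerical constant $\tfrac{25}{2}$ to the precise form of Renegar's estimate.
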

In practice, we solve \eqnok{eq:QP} approximately, using an algorithm whose
complexity depends on the threshold $\bar{\epsilon}$ for which the objective is
accurate to within $\bar{\epsilon}$. That is, we seek $\hat{x}$ such that
\[
\beta^{-1} \| \hat{x} - x(\beta) \|^2 \le
f_{\beta}(\hat{x}) - f_{\beta}(x(\beta)) \le \bar{\epsilon},
\]
where the left-hand inequality follows from the fact that $f_{\beta}$ is
strongly convex with modulus $\beta^{-1}$.  If  we define
\beq \label{eq:C20}
\bar{\epsilon} := \frac{C_{20}^2}{\beta^3}, \quad 
C_{20} := \frac{25 C_*}{2 \|d\| \delta_P \delta_D},
\eeq
then by combining some elementary inequalities with the results of
Theorem~\ref{th:pert1}, we obtain the bounds
\[
| c^T \hat{x} - c^T x^* |  \le 
\frac{1}{\beta} \left[\frac{25 C_*}{\delta_P \delta_D} + 6C_*^2 + \sqrt{6} \| \bar{x} \| C_* \right], \quad
\| A \hat{x} - b \|  \le
\frac{1}{\beta} \left[ (1+\sqrt{2}) C_* + \frac{25 C_*}{2\delta_P \delta_D} \right].
\]
The following result is almost an immediate consequence.
\begin{theorem} \label{th:xh2}
Suppose that $\delta_P$ and $\delta_D$ are both positive and let $(x^*,u^*)$ be
any primal-dual optimal pair. Suppose that $C_*$ is defined as in
Theorem~\ref{th:pert1}.
Then for any given positive pair $(\epsilon,\delta)$, we have that
$\hat{x}$ satisfies the inequalities in Definition~\ref{def:epsdel} provided
that $\beta$ satisfies the following three lower bounds:
\begin{align*}
\beta & \ge  \frac{10 C_*}{\|d\| \min(\delta_P, \delta_D)}, \\
\beta &\ge \frac{1}{\delta |c^Tx^*|} 
\left[\frac{25 C_*}{\delta_P \delta_D} + 6C_*^2 + \sqrt{6} \| \bar{x} \| C_* \right], \\
\beta & \ge \frac{1}{\epsilon} \left[ (1+\sqrt{2}) C_* + \frac{25 C_*}{2\delta_P \delta_D} \right].
\end{align*}
\end{theorem}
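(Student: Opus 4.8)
The plan is to read off the conclusion directly from the two bounds displayed just before the theorem,
\[
| c^T \hat{x} - c^T x^* | \le \frac{1}{\beta}\left[\frac{25 C_*}{\delta_P \delta_D} + 6C_*^2 + \sqrt{6}\,\|\bar{x}\|\,C_*\right], \qquad
\|A\hat{x} - b\| \le \frac{1}{\beta}\left[(1+\sqrt{2})C_* + \frac{25 C_*}{2\delta_P\delta_D}\right],
\]
which themselves follow from Theorem~\ref{th:pert1} combined with the choice \eqnok{eq:C20} of $\bar\epsilon$ and $C_{20}$ and a few elementary norm inequalities. The first lower bound on $\beta$ in the statement, $\beta \ge 10C_*/(\|d\|\min(\delta_P,\delta_D))$, is precisely the extra hypothesis needed to invoke the objective estimate of Theorem~\ref{th:pert1}, so I would record at the outset that, under this bound, both displayed inequalities are in force.

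I would then verify the three requirements of Definition~\ref{def:epsdel} in turn. Nonnegativity $\hat{x}\ge 0$ holds because $\hat{x}$ is a feasible point of the constrained problem \eqnok{eq:QP} — the stochastic-coordinate-descent iterates are projected onto the nonnegative orthant, and feasibility of $\hat{x}$ is in any event built into the strong-convexity estimate $\beta^{-1}\|\hat{x}-x(\beta)\|^2 \le f_\beta(\hat{x})-f_\beta(x(\beta))$ used to measure the inexact solve. For the objective gap, the second lower bound on $\beta$ rearranges to $\tfrac{1}{\beta}\big[\tfrac{25C_*}{\delta_P\delta_D}+6C_*^2+\sqrt6\,\|\bar{x}\|\,C_*\big]\le \delta\,|c^Tx^*|$, so the first displayed inequality gives $|c^T\hat{x}-c^Tx^*|\le\delta\,|c^Tx^*|$. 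For (near-)feasibility, using $\|\cdot\|_\infty\le\|\cdot\|_2$ together with the second displayed inequality and the third lower bound on $\beta$ yields $\|A\hat{x}-b\|_\infty\le\|A\hat{x}-b\|_2\le\tfrac{1}{\beta}\big[(1+\sqrt2)C_*+\tfrac{25C_*}{2\delta_P\delta_D}\big]\le\epsilon$. The three items together are exactly Definition~\ref{def:epsdel}.

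I do not expect any genuinely hard step: the result is, as the text says, almost immediate. The only places that call for a moment's care are (i) making sure the ``elementary inequalities'' used to pass from Theorem~\ref{th:pert1} to the two displayed bounds are applied consistently — bounding $\|\hat{x}-x(\beta)\|$ by $C_{20}/\beta$ via strong convexity, then controlling $\|A(\hat{x}-x(\beta))\|$ and $|c^T(\hat{x}-x(\beta))|$ through $\|A\|\le\|d\|$ and $\|c\|\le\|d\|$, so that the numerical constants come out as stated; and (ii) the harmless passage from the Euclidean norm in Theorem~\ref{th:pert1} to the $\ell_\infty$ norm of Definition~\ref{def:epsdel}, which only strengthens the hypothesis on $\beta$ relative to what is strictly needed.
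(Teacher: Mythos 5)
Your proposal is correct and matches the paper's own (largely implicit) argument: the paper also obtains Theorem~\ref{th:xh2} by combining Theorem~\ref{th:pert1} with the tolerance choice \eqnok{eq:C20} (strong convexity giving $\|\hat{x}-x(\beta)\|\le C_{20}/\beta$, then $\|A\|\le\|d\|$, $\|c\|\le\|d\|$) to get the two displayed bounds, and then reading off the three conditions of Definition~\ref{def:epsdel} from the three lower bounds on $\beta$ exactly as you do, including the $\|\cdot\|_\infty\le\|\cdot\|_2$ step and nonnegativity of $\hat{x}$ from the constraint $x\ge 0$ in \eqnok{eq:QP}.
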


For an instance of vertex cover with $n$ nodes and $m$ edges, we can
show that $\delta_{P}^{-1} = O(n^{1/2} (m + n)^{1/2})$ and
$\delta_{D}^{-1} = O((m + n)^{1/2})$ (see
Appendix~\ref{app:lp:conditioning}). The values $\bar{x} = \bfone$ and
$\bar{u}=\vec{0}$ yield $C_* \leq \sqrt{m}$. We therefore obtain
$\beta =O( m^{1/2} n^{1/2} (m+n) (\min \{\epsilon, \delta
|c^{T}x^*|\})^{-1})$.  


\subsection{Solving the QP Approximation: Coordinate Descent} \label{sec:SCD}

We propose the use of a stochastic coordinate descent (SCD) algorithm
\cite{LiuW13b} to solve \eqnok{eq:QP}. Each step of SCD chooses a component $i
\in \{1,2,\dotsc,n\}$ and takes a step in the $i$th component of $x$ along the
partial gradient of \eqnok{eq:QP} with respect to this component, projecting if
necessary to retain nonnegativity. This simple procedure depends on the
following constant $\Lmax$, which bounds the diagonals of the Hessian in the
objective of \eqnok{eq:QP}:
\beq \label{eq:Lmax}
\Lmax = \beta (\max_{i=1,2,\dotsc,n} A_{:i}^T A_{:i}) + \beta^{-1},
\eeq
where $A_{:i}$ denotes the $i$th column of $A$. Algorithm~\ref{alg:SCD}
describes the SCD method.
\begin{algorithm}[t] 
\caption{SCD  method for \eqnok{eq:QP}}
\label{alg:SCD}
\begin{algorithmic}[1]
\STATE Choose $x_0 \in \R^n$; $j \leftarrow 0$ \\
\LOOP
\STATE Choose $i(j) \in \{1,2,\dotsc,n\}$ randomly with equal probability; \\
\STATE Define $x_{j+1}$ from $x_j$ by setting
$ [x_{j+1}]_{i(j)} \leftarrow \max(0, [x_j]_{i(j)} -(1/\Lmax) [\nabla f_{\beta}(x_j)]_{i(j)})$, leaving other components unchanged; \\
\STATE $j \leftarrow j + 1$; \\
\ENDLOOP
\end{algorithmic}
\end{algorithm}
Convergence results for Algorithm~\ref{alg:SCD} can be obtained from
\cite{LiuW13b}. In this result, $\E(\cdot)$ denotes expectation over all the
random variables $i(j)$ indicating the update indices chosen at each
iteration. We need the following quantities:
\beq \label{eq:LL}
l := \frac{1}{\beta}, \quad
R := \sup_{j=1,2,\dotsc n} \|x_j - x(\beta) \|_2,
\eeq
where $x_j$ denotes the $j$th iterate of the SCD algorithm.  (Note
that $R$ bounds the maximum distance that the iterates travel from the
solution $x(\beta)$ of \eqnok{eq:QP}.)

\begin{theorem} \label{th:scd}
For Algorithm~\ref{alg:SCD} we have
\[
\E \|x_j - x(\beta) \|^2 + \frac{2}{\Lmax} \E (f_{\beta}(x_j) - f_{\beta}^*)
\le
\left( 1- \frac{l}{n(l+\Lmax)} \right)^j \left( R^2 +
\frac{2}{\Lmax} (f_{\beta}(x_0) - f_{\beta}^*) \right),
\]
where $f_{\beta}^* := f_{\beta} (x(\beta))$.  We obtain
high-probability convergence of $f_{\beta}(x_j)$ to $f_{\beta}^*$ in
the following sense: For any $\eta \in (0,1)$ and any small
$\bar{\epsilon}$, we have
$$
P (f_{\beta}(x_j)-f_{\beta}^* < \bar{\epsilon}) \ge 1-\eta,
$$
\ provided that 
\[
j \ge \frac{n(l+\Lmax)}{l} \left | \log
\frac{\Lmax}{2 \eta \bar{\epsilon}} 
\left( R^2 +
\frac{2}{\Lmax} (f_{\beta}(x_0) - f_{\beta}^*) \right)
\right|.
\]
\end{theorem}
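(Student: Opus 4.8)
The plan is to derive both parts of the theorem from the stochastic-coordinate-descent convergence theory of \cite{LiuW13b}: the expectation bound is the serial, zero-delay specialization of their main result, and the high-probability statement then follows from Markov's inequality together with an elementary estimate for $\log(1/\rho)$, where $\rho$ is the per-iteration contraction factor.

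First I would check that $f_\beta$ from \eqnok{eq:QP} and the feasible region $\{x \ge 0\}$ satisfy the hypotheses of the relevant theorem in \cite{LiuW13b}. The set $\{x \ge 0\}$ is a box and hence separable, so the projected update on line~4 of Algorithm~\ref{alg:SCD} is exactly the coordinate proximal step analyzed there. The objective $f_\beta$ is quadratic with Hessian $\beta A^{T}A + \beta^{-1}I$; it is strongly convex with modulus $l = 1/\beta$ in the Euclidean norm (the role of the $\tfrac{1}{2\beta}\|x-\bar{x}\|^2$ term, which also yields the left-hand inequality quoted just before the theorem), and the Lipschitz constant of its $i$th partial gradient equals the diagonal entry $\beta A_{:i}^{T}A_{:i} + \beta^{-1} \le \Lmax$, so $\Lmax$ of \eqnok{eq:Lmax} is a valid coordinate-Lipschitz constant and line~4 uses a step no larger than the reciprocal of each. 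Taking a feasible start $x_0 \ge 0$, every update keeps the iterates nonnegative and decreases $f_\beta$, so they stay in the compact sublevel set $\{x : f_\beta(x) \le f_\beta(x_0)\}$ and the supremum $R$ in \eqnok{eq:LL} is finite, making the right-hand sides below well defined.

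With these hypotheses in place, the first displayed inequality is the serial ($\tau = 0$) instance of the main convergence theorem of \cite{LiuW13b}, applied to the Lyapunov quantity $\|x_j - x(\beta)\|^2 + \tfrac{2}{\Lmax}(f_\beta(x_j) - f_\beta^*)$, which contracts in expectation by the factor $\rho := 1 - \tfrac{l}{n(l+\Lmax)} \in (0,1)$ per iteration, the $l+\Lmax$ in the denominator being the sum of the strong-convexity modulus and the coordinate-Lipschitz constant. For the high-probability claim, drop the nonnegative term $\E\|x_j - x(\beta)\|^2$ from the left-hand side to get $\E(f_\beta(x_j) - f_\beta^*) \le \tfrac{\Lmax}{2}\,\rho^{\,j}\bigl(R^2 + \tfrac{2}{\Lmax}(f_\beta(x_0) - f_\beta^*)\bigr)$. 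Since $f_\beta(x_j) - f_\beta^* \ge 0$, Markov's inequality gives $P(f_\beta(x_j) - f_\beta^* \ge \bar{\epsilon}) \le \bar{\epsilon}^{-1}\,\E(f_\beta(x_j) - f_\beta^*)$, so it suffices to force $\tfrac{\Lmax}{2\bar{\epsilon}}\,\rho^{\,j}\bigl(R^2 + \tfrac{2}{\Lmax}(f_\beta(x_0) - f_\beta^*)\bigr) \le \eta$. Taking logarithms, this holds once $j\log(1/\rho) \ge \log\!\bigl[\tfrac{\Lmax}{2\eta\bar{\epsilon}}\bigl(R^2 + \tfrac{2}{\Lmax}(f_\beta(x_0) - f_\beta^*)\bigr)\bigr]$, and since $\log(1/\rho) = -\log\bigl(1 - \tfrac{l}{n(l+\Lmax)}\bigr) \ge \tfrac{l}{n(l+\Lmax)}$, the stated bound on $j$ is sufficient, with the absolute value absorbing the trivial case in which the argument of the outer logarithm is already at most $1$.

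The main obstacle I anticipate is bookkeeping rather than conceptual: matching the constants and the precise shape of the Lyapunov function to the statement in \cite{LiuW13b} (in particular, confirming that Algorithm~\ref{alg:SCD} is the zero-delay special case of their possibly-asynchronous theorem and that their strong-convexity parameter is exactly $l = \beta^{-1}$ in the norm used here), and verifying finiteness of $R$ so the bound is meaningful. The Markov step and the $\log(1/\rho)$ estimate are routine.
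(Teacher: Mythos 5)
Your proposal matches the paper's own treatment: the paper simply invokes the convergence theory of \cite{LiuW13b} for Algorithm~\ref{alg:SCD} (with $l=\beta^{-1}$ and $\Lmax$ as in \eqnok{eq:Lmax}), and the high-probability statement is the standard consequence of the expectation bound via Markov's inequality and the estimate $\log(1/\rho)\ge \frac{l}{n(l+\Lmax)}$, exactly as you argue. Your hypothesis checks (separable nonnegativity constraint, coordinate Lipschitz constants, strong convexity modulus, finiteness of $R$) are correct and only make explicit what the paper leaves implicit.
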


\paragraph*{Worst-Case Complexity Bounds.} 
We now combine the analysis in Sections~\ref{sec:qp_approx} and \ref{sec:SCD}
to derive a worst-case complexity bound for our approximate LP solver.
Supposing that the columns of $A$ have norm $O(1)$, we have from
\eqnok{eq:Lmax} and \eqnok{eq:LL} that $l = \beta^{-1}$ and $\Lmax = O(\beta)$.
Theorem~\ref{th:scd} indicates that we require $O(n \beta^2)$ iterations to
solve~\eqref{eq:QP} (modulo a log term). For the values of $\beta$ described in
Section~\ref{sec:qp_approx}, this translates to a complexity estimate of $O(m^3
n^2 / \epsilon^2)$. 

In order to obtain the desired accuracy in terms of feasibility and
function value of the LP (captured by $\epsilon$) we need to solve the
QP to within the different, tighter tolerance $\bar{\epsilon}$
introduced in \eqnok{eq:C20}. Both tolerances are related to the
choice of penalty parameter $\beta$ in the QP. Ignoring here the
dependence on dimensions $m$ and $n$, we note the relationships $\beta
\sim \epsilon^{-1}$ (from Theorem \ref{th:xh2}) and $\bar{\epsilon}
\sim \beta^{-3} \sim \epsilon^3$ (from \eqnok{eq:C20}). Expressing all
quantities in terms of $\epsilon$, and using Theorem~\ref{th:scd}, we
see an iteration complexity of $\epsilon^{-2}$ for SCD (ignoring log
terms). The linear convergence rate of SCD is instrumental to this
favorable value. By contrast, standard variants of stochastic-gradient
descent (SGD) applied to the QP yield poorer complexity.  For
diminishing-step or constant-step variants of SGD, we see complexity
of $\epsilon^{-7}$, while for robust SGD, we see $\epsilon^{-10}$.
(Besides the inverse dependence on $\bar{\epsilon}$ or its square in
the analysis of these methods, there is a contribution of order
$\epsilon^{-2}$ from the conditioning of the QP.)


\subsection{Enhancements}

We mention two important enhancements that improve the efficiency of
the approach outlined above.  The first is an asynchronous parallel
implementation of Algorithm~\ref{alg:SCD} and the second is the use of
an augmented Lagrangian framework rather than ``one-shot''
approximation by the QP in \eqnok{eq:QP}.

\paragraph*{Asynchronous Parallel SCD.}

An asynchronous parallel version of Algorithm~\ref{alg:SCD}, described
in ~\cite{LiuW13b}, is suitable for execution on multicore,
shared-memory architectures. Each core, executing a single thread, has
access to the complete vector $x$. Each thread essentially runs its
own version of Algorithm~\ref{alg:SCD} independently of the others,
choosing and updating one component $i(j)$ of $x$ on each
iteration. Between the time a thread reads $x$ and performs its
update, $x$ usually will have been updated by several other threads.
Provided that the number of threads is not too large (according to
criteria that depends on $n$ and on the diagonal dominance properties of the
Hessian matrix), and the step size is chosen appropriately, the convergence
rate is similar to the serial case, and near-linear speedup is observed.



\paragraph*{Augmented Lagrangian Framework.} 

It is well known (see for example~\cite{Ber99,NocW06}) that the
quadratic-penalty approach can be extended to an augmented Lagrangian
framework, in which a sequence of problems of the form \eqnok{eq:QP}
are solved, with the primal and dual solution estimates $\bar{x}$ and
$\bar{u}$ (and possibly the penalty parameter $\beta$) updated between
iterations. Such a ``proximal method of multipliers'' for LP was
described in \cite{SJW14}.  We omit a discussion of the convergence
properties of the algorithm here, but note that the quality of
solution depends on the values of $\bar{x}$, $\bar{u}$ and $\beta$ at
the last iteration before convergence is declared. By applying
Theorem~\ref{th:xh2}, we note that the constant $C_*$ is smaller when
$\bar{x}$ and $\bar{u}$ are close to the primal and dual solution
sets, thus improving the approximation and reducing the need to
increase $\beta$ to a larger value to obtain an approximate solution
of acceptable accuracy.

\section{Experiments} \label{sec:experiments}

Our experiments address two main questions: (1) Is our approximate LP-rounding
scheme useful in graph analysis tasks that arise in machine learning?  and (2)
How does our approach compare to a state-of-the-art commercial solver?  We give
favorable answers to both questions.

\subsection{Is Our Approximate LP-Rounding Scheme Useful in Graph Analysis
Tasks?}

LP formulations have been used to solve MAP inference problems on graphical
models~\cite{ravikumar:2010:jmlr}, but general-purpose LP solvers have
rarely been used, for reasons of scalability. We demonstrate that the
rounded solutions obtained using Thetis are of comparable quality to those
obtained with state-of-the-art systems. We perform experiments on two different
tasks: entity linking and text chunking.  For each task, we produce a factor
graph \cite{Kschischang2001}, which consists of a set of random variables and a
set of factors to describe the correlation between random variables. We then
run MAP inference on the factor graph using the LP formulation in
\cite{Kschischang2001} and compare the quality of the solutions obtained by
Thetis with a Gibbs sampling-based approach~\cite{Zhang2013}. We follow the
LP-rounding algorithm in~\cite{ravikumar:2010:jmlr} to solve the MAP estimation
problem.  For entity linking, we use the TAC-KBP 2010
benchmark\footnote{\url{http://nlp.cs.qc.cuny.edu/kbp/2010/}}. The input
graphical model has 12K boolean random variables and 17K factors. For text
chunking, we use the CoNLL 2000 shared
task\footnote{\url{http://www.cnts.ua.ac.be/conll2000/chunking/}}. The factor
graph contained 47K categorical random variables (with domain size 23) and 100K
factors. We use the training sets provided by TAC-KBP 2010 and CoNLL 2000
respectively.  We evaluate the quality of both approaches using the official
evaluation scripts and evaluation data sets provided by each challenge.
Figure~\ref{fig:high:level} contains a description of the three relevant
quality metrics, precision (P), recall (R) and F1-scores.
Figure~\ref{fig:high:level} demonstrates that our algorithm produces solutions
of quality comparable with state-of-the-art approaches for these graph analysis
tasks.

\begin{figure}
\footnotesize
\centering
\begin{tabular}{ c c | c c | c  c  c  c| c c c c } 
\multicolumn{2}{c}{} & \multicolumn{2}{c}{}   & \multicolumn{4}{c}{Thetis}
& \multicolumn{4}{c}{Gibbs Sampling} \\ \hline 
Task  &  Formulation & PV & NNZ &  {\bf P} & {\bf R}  & {\bf F1} & {Rank}
&  {\bf P} & {\bf R}  & {\bf F1} & {Rank}  \\ \hline
CoNLL  & Skip-chain CRF & 25M & 51M &  .87 & .90 & .89 & 10/13 & .86 & .90
& .88 & 10/13  \\
TAC-KBP& Factor graph & 62K & 115K&  .79 & .79 & .79 & 6/17 & .80 & .80 & .80
& 6/17 \\
\hline
\end{tabular}
\caption{Solution quality of our LP-rounding approach on two tasks. PV is the
number of primal variables and NNZ is the number of non-zeros in the constraint
matrix of the LP in standard form.  The {\it rank} indicates where we would
been have placed, had we participated in the competition.}
\label{fig:high:level}
\vspace{-0.5cm}
\end{figure}

\subsection{How does our proposed approach compare to a state-of-the-art
commercial solver?} \label{sec:expt_cplex}

We conducted numerical experiments on three different combinatorial problems
that commonly arise in graph analysis tasks in machine learning: vertex cover,
independent set, and multiway cuts. For each problem, we compared the
performance of our LP solver against the LP and IP solvers of \cplex (v12.5)
(denoted as \cplex-LP and \cplex-IP respectively). The two main goals of this
experiment are to: (1) compare the quality of the integral solutions obtained
using LP-rounding with the integral solutions from \cplex-IP and (2) compare
wall-clock times required by Thetis and \cplex-LP to solve the LPs for the
purpose of LP-rounding.

\paragraph*{Datasets.} 

Our tasks are based on two families of graphs. The first family of
instances ({\it frb59-26-1} to {\it frb59-26-5}) was obtained from
Bhoslib\footnote{\url{http://www.nlsde.buaa.edu.cn/~kexu/benchmarks/graph-benchmarks.htm}}
(Benchmark with Hidden Optimum Solutions); they are considered
difficult problems~\cite{Xu2006}. The instances in this family are similar; the
first is reported in the figures of this section, while the remainder appear in
Appendix~\ref{app:experiments}. The second family of instances are social
networking graphs obtained from the Stanford Network Analysis Platform
(SNAP)\footnote{\url{http://snap.stanford.edu/}}.

\paragraph*{System Setup.}
Thetis was implemented using a combination of C++ (for Algorithm~\ref{alg:SCD})
and Matlab (for the augmented Lagrangian framework). Our implementation of the
augmented Lagrangian framework was based on \cite{Eckstein2010}. All
experiments were run on a 4 Intel Xeon E7-4450 (40 cores @ 2Ghz) with 256GB of
RAM running Linux 3.8.4 with a 15-disk RAID0.  \cplex used 32 (of the 40) cores
available in the machine, and for consistency, our implementation was also
restricted to 32 cores. \cplex implements presolve procedures that detect
redundancy, and substitute and eliminate variables to obtain equivalent,
smaller LPs.  Since the aim of this experiment is compare the algorithms used
to solve LPs, we ran both \cplex-LP and Thetis on the reduced LPs generated by
the presolve procedure of \cplex-LP.  Both \cplex-LP and Thetis were run to a
tolerance of $\epsilon=0.1$.  Additional experiments with \cplex-LP run using
its default tolerance options are reported in Appendix~\ref{app:experiments}.
We used the barrier optimizer while running \cplex-LP.  All codes were provided
with a time limit of 3600 seconds excluding the time taken for preprocessing as
well as the runtime of the rounding algorithms that generate integral solutions
from fractional solutions. 

\paragraph*{Tasks.} 
We solved the vertex cover problem using the approximation algorithm
described in Section~\ref{sec:vc:rounding}.  We solved the maximum
independent set problem using a variant of the $es + o(s)$-factor
approximation in \cite{Bansal2012} where $s$ is the maximum degree of
a node in the graph (see Appendix~\ref{app:roundings} for details).
For the multiway-cut problem (with $k=3$) we used the $3/2 -
1/k$-factor approximation algorithm described in \cite{Vazirani}.  The
details of the transformation from approximate infeasible solutions to
feasible solutions are provided in Appendix~\ref{app:roundings}. Since
the rounding schemes for maximum-independent set and multiway-cut are
randomized, we chose the best feasible integral solution from 10
repetitions.

\begin{figure}[h]
\centering
\footnotesize
\begin{tabular}{|c||rr|rr||rr|rr||rr|rr|} 
\hline
& \multicolumn{8}{c|}{{\bf Minimization problems}} &  \multicolumn{4}{c|}{{\bf Maximization problems}} \\ \cline{2-13}
 Instance& \multicolumn{4}{c}{{\bf VC}} & \multicolumn{4}{c | }{{\bf MC}}
 & \multicolumn{4}{c|}{{\bf MIS}} \\ \cline{2-13}
 & PV & NNZ & S & Q & PV & NNZ & S & Q  & PV & NNZ & S & Q \\ \cline{1-13}
frb59-26-1 & 0.12 & 0.37 & 2.8 & 1.04 & 0.75 & 3.02 & 53.3 & 1.01 & 0.12 & 0.38 & 5.3 & 0.36\\
Amazon & 0.39 & 1.17 & 8.4 & 1.23 & 5.89 & 23.2 & - & 0.42 & 0.39 & 1.17 & 7.4 & 0.82\\
DBLP & 0.37 & 1.13 & 8.3 & 1.25 & 6.61 & 26.1 & - & 0.33 & 0.37 & 1.13 & 8.5 & 0.88\\
Google+ & 0.71 & 2.14 & 9.0 & 1.21 & 9.24 & 36.8 & - & 0.83 & 0.71 & 2.14 & 10.2 & 0.82\\
\hline
\end{tabular}
\caption{Summary of wall-clock speedup (in comparison with \cplex-LP) and
solution quality (in comparison with \cplex-IP) of Thetis on three graph
analysis problems. Each code is run with a time limit of one hour and
parallelized over 32 cores, with `-' indicating that the code reached the time
limit. PV is the number of primal variables while NNZ is the number of nonzeros
in the constraint matrix of the LP in standard form (both in millions). S is
the speedup, defined as the time taken by Cplex-LP divided by the time taken by
Thetis. Q is the ratio of the solution objective obtained by Thetis to that
reported by \cplex-IP.  For minimization problems ({\bf VC} and {\bf MC}) lower
Q is better; for maximization problems ({\bf MIS}) higher Q is better. For MC,
a value of $Q<1$ indicates that Thetis found a better solution than \cplex-IP
found within the time limit.  }
\label{tab:ip_summary}
\end{figure}

\begin{figure}[h]
\footnotesize
\centering
\setlength{\tabcolsep}{3pt}
\begin{tabular}{ | c | c c c | c c c | c c c | }
\hline
{\bf VC} & \multicolumn{3}{c|}{ \cplex IP} & \multicolumn{3}{c|}{ \cplex LP} &
\multicolumn{3}{c|}{ Thetis } \\ \cline{2-10}
(min) & t (secs) & BFS & Gap (\%) & t (secs) & LP & RSol & t (secs) & LP & RSol \\ \hline
frb59-26-1 & - & 1475 & 0.67 & 2.48 & 767 & 1534 & 0.88 & 959.7 & 1532\\
Amazon & 85.5 & 1.60\e{5} & -& 24.8 & 1.50\e{5} & 2.04\e{5} & 2.97 & 1.50\e{5} & 1.97\e{5}\\
DBLP & 22.1 & 1.65\e{5} & - & 22.3 & 1.42\e{5} & 2.08\e{5} & 2.70 & 1.42\e{5} & 2.06\e{5}\\
Google+ & - & 1.06\e{5} & 0.01 & 40.1 & 1.00\e{5} & 1.31\e{5} & 4.47 & 1.00\e{5} & 1.27\e{5}\\
\hline
{\bf MC} & \multicolumn{3}{c|}{ \cplex IP} & \multicolumn{3}{c|}{ \cplex LP}
& \multicolumn{3}{c|}{ Thetis} \\ \cline{2-10}
(min) & t (secs) & BFS & Gap (\%) & t (secs) & LP & RSol & t (secs) & LP & RSol \\ \hline
frb59-26-1 & 72.3 & 346 & - & 312.2 & 346 & 346 & 5.86 & 352.3 & 349\\
Amazon & - & 12 & NA & - & - & - & 55.8 & 7.28 & 5\\
DBLP & - & 15 & NA& - & - & - & 63.8 & 11.7 & 5\\
Google+ & - & 6 & NA & - & - & - & 109.9 & 5.84 & 5\\
\hline
{\bf MIS} & \multicolumn{3}{c|}{ \cplex IP} & \multicolumn{3}{c|}{ \cplex LP} &
\multicolumn{3}{c|}{ Thetis} \\ \cline{2-10}
(max) & t (secs) & BFS & Gap (\%) & t (secs) & LP & RSol & t (secs) & LP & RSol \\ \hline
frb59-26-1 & - & 50 & 18.0 & 4.65 & 767 & 15 & 0.88 & 447.7 & 18\\
Amazon & 35.4 & 1.75\e{5} & - & 23.0 & 1.85\e{5} & 1.56\e{5} & 3.09 & 1.73\e{5} & 1.43\e{5}\\
DBLP & 17.3 & 1.52\e{5} & - & 23.2 & 1.75\e{5} & 1.41\e{5} & 2.72 & 1.66\e{5} & 1.34\e{5}\\
Google+ & - & 1.06\e{5} & - & 44.5 & 1.11\e{5} & 9.39\e{4} & 4.37 & 1.00\e{5} & 8.67\e{4}\\
\hline
\end{tabular}
\caption{Wall-clock time and quality of fractional and integral solutions for
three graph analysis problems using Thetis, \cplex-IP and \cplex-LP. Each code
was given a time limit of one hour, with `-' indicating a timeout.  BFS is the
objective value of the best integer feasible solution found by \cplex-IP.  The
gap is defined as (BFS$-$BB)/BFS where BB is the best known solution bound
found by \cplex-IP within the time limit.  A gap of `-' indicates that the
problem was solved to within $0.01\%$ accuracy and NA indicates that \cplex-IP
was unable to find a valid solution bound.  LP is the objective value of the LP
solution, and RSol is objective value of the rounded solution.}
\label{tab:ip_details}
\vspace{-0.5cm}
\end{figure}

\paragraph*{Results.} 
The results are summarized in Figure~\ref{tab:ip_summary}, with additional
details in Figure~\ref{tab:ip_details}. We discuss the results for the vertex
cover problem. On the Bhoslib instances, the integral solutions from Thetis
were within 4\% of the documented optimal solutions. In comparison,  \cplex-IP
produced integral solutions that were within 1\% of the documented optimal
solutions, but required an hour for each of the instances.  Although the LP
solutions obtained by Thetis were less accurate than those obtained by
\cplex-LP, the rounded solutions from Thetis and \cplex-LP are almost exactly
the same. In summary, the LP-rounding approaches using Thetis and \cplex-LP
obtain integral solutions of comparable quality with \cplex-IP --- but Thetis
is about three times faster than \cplex-LP.  

We observed a similar trend on the large social networking graphs. We were able
to recover integral solutions of comparable quality to \cplex-IP, but seven to
eight times faster than using LP-rounding with \cplex-LP. We make two
additional observations.  The difference between the optimal fractional and
integral solutions for these instances is much smaller than frb59-26-1. We
recorded unpredictable performance of \cplex-IP on large instances. Notably,
\cplex-IP was able to find the optimal solution for the {\it Amazon} and {\it
DBLP} instances, but timed out on {\it Google+}, which is of comparable size.
On some instances, \cplex-IP outperformed even \cplex-LP in wall clock time,
due to specialized presolve strategies.

\section{Conclusion}

We described Thetis, an LP rounding scheme based on an approximate solver for
LP relaxations of combinatorial problems.  We derived worst-case runtime and
solution quality bounds for our scheme, and demonstrated that our approach was
faster than an alternative based on a state-of-the-art LP solver, while
producing rounded solutions of comparable quality.

\section*{Acknowledgements}
SS is generously supported by ONR award N000141310129.  JL is generously
supported in part by NSF awards DMS-0914524 and DMS-1216318 and ONR award
N000141310129. CR's work on this project is generously supported by NSF CAREER
award under IIS-1353606, NSF award under CCF-1356918, the ONR under awards
N000141210041 and  N000141310129, a Sloan Research Fellowship, and gifts from
Oracle and Google.  SJW is generously supported in part by NSF awards
DMS-0914524 and DMS-1216318, ONR award N000141310129, DOE award DE-SC0002283,
and Subcontract 3F-30222 from Argonne National Laboratory.  Any 
recommendations, findings or opinions expressed in this work are those of the
authors and do not necessarily reflect the views of any of the above sponsors.

\newpage
\small
\bibliography{lp}

\newpage
\normalsize

\appendix

\begin{center}
{\bf\large Supplementary Material}
\end{center}

\section{Perturbation Results}

We discuss here the perturbation results for the quadratic
approximation \eqnok{eq:QP} to the linear program \eqnok{eq:LP}. These
results constitute a proof of Theorem~\ref{th:xh2}.

We note for future reference that the optimality (KKT) conditions for the
primal-dual pair of LPs \eqnok{eq:LP} and \eqnok{eq:LPdual} are 
\beq \label{eq:lp.kkt}
Ax=b, \quad 0 \le c- A^T u \, \perp \, x \ge 0.
\eeq
The QP approximation \eqnok{eq:QP} is equivalent to the following
monotone linear complementarity problem (LCP):
\beq \label{eq:lpb.lcp}
0 \le x \, \perp \, 
F_{\beta}(x) := c -A^T\bar{u} + \beta A^T(Ax-b) + \frac{1}{\beta} (x-\bar{x}).
\eeq


Here we 
rely on Renegar's theory \cite{Ren94b} which requires not only that
primal and dual are both solvable, but also that they are still
solvable after we make arbitrary small perturbations to the data
$(A,b,c)$. This includes cases in which the basis has fewer nonzeros
than there are equality constraints (a situation known as ``primal
degeneracy'').  We assume throughout that $A$ has full row rank $m$.
If $A$ were row rank deficient, then even if the primal-dual pair had
a solution, we would be able to find an arbitrarily small perturbation
that renders the primal infeasible.

In accordance with Renegar, we use $d:=(A,b,c)$ to denote the data for
the problems \eqnok{eq:LP} and \eqnok{eq:LPdual}. We denote by $\Prio$ the
set of data $d$ for which the primal \eqnok{eq:LP} is infeasible, and
define $\Dualo$ analogously for the dual \eqnok{eq:LPdual}. Renegar uses
the ``distance to infeasibility'' to define a condition number for the
primal and dual. Specifically, defining
\beq \label{eq:def.delta}
\delta_P := \frac{\dist(d,\Prio)}{\|d\|}, \quad
\delta_D := \frac{\dist(d,\Dualo)}{\|d\|},
\eeq
the quantities $1/\delta_P$ and $1/\delta_D$ capture the sensitivity
of the optimal objective value for the problem \eqnok{eq:LP} to
perturbations in $b$ and $c$. Note that both $\delta_P$ and $\delta_D$
lie in the interval $[0,1]$.

We assume $\delta_P>0$ and $\delta_D>0$ throughout the analysis below. This
implies that the primal and dual are both feasible, hence by strong duality
both have solutions $x^*$ and $u^*$ (not necessarily unique).

\begin{lemma} \label{lem:xb}
Suppose that $\delta_P>0$ and $\delta_D>0$, and let $x^*$ be any solution of
\eqnok{eq:LP} and $u^*$ be any solution of \eqnok{eq:LPdual}, and define 
\[
C_* := \max(\|x^*-\bar{x}\|,\|u^*-\bar{u}\|).
\]
Then the unique solution $x(\beta)$ of \eqnok{eq:QP} satisfies the following
inequalities:
\begin{align*}
\|Ax(\beta)-b \|  & \le \beta^{-1} \left[\|u^* -\bar{u} \| + 
\sqrt{\|u^*-\bar{u}\|^2 + \|x^* -\bar{x}\|^2} \right] \\
& \le \beta^{-1} (1+\sqrt{2})C_*, \\
\| x(\beta) -\bar{x} \| & \le  \left[ 2 \|u^*-\bar{u}\| \left[ \|u^*-\bar{u}\| + \sqrt{\|u^*-\bar{u}\|^2+\|x^*-\bar{x}\|^2}\right] +
\|x^*-\bar{x}\|^2 \right]^{1/2} \\
 & \le \sqrt{6} C_*.
\end{align*}
\end{lemma}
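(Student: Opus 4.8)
The plan is to extract everything from a single variational comparison. Since the regularizer $\tfrac{1}{2\beta}\|x-\bar{x}\|^2$ makes $f_{\beta}$ strongly convex with modulus $1/\beta$ on the convex set $\{x\ge 0\}$, the minimizer $x(\beta)$ is unique, and in particular $f_{\beta}(x(\beta)) \le f_{\beta}(x^{*})$ because $x^{*}\ge 0$ is feasible for \eqnok{eq:QP}. First I would evaluate the right-hand side: since $Ax^{*}=b$, the linear-penalty and quadratic-penalty terms in $f_{\beta}(x^{*})$ vanish, leaving $f_{\beta}(x^{*}) = c^{T}x^{*} + \tfrac{1}{2\beta}\|x^{*}-\bar{x}\|^{2}$. (A preliminary remark: full row rank of $A$ together with $\delta_P,\delta_D>0$ guarantees $x^{*}$ and $u^{*}$ exist, so $C_{*}$ is well defined.)

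Next I would rewrite $f_{\beta}(x(\beta))$ using LP optimality. Put $s^{*} := c - A^{T}u^{*} \ge 0$ (dual feasibility), use complementary slackness $(s^{*})^{T}x^{*}=0$, and strong duality $b^{T}u^{*}=c^{T}x^{*}$. Writing $r := Ax(\beta)-b$, a short computation (substitute $c=A^{T}u^{*}+s^{*}$, then $Ax(\beta)=b+r$) gives $c^{T}x(\beta) - \bar{u}^{T}r = c^{T}x^{*} + (u^{*}-\bar{u})^{T}r + (s^{*})^{T}x(\beta)$, so that
\[
f_{\beta}(x(\beta)) = c^{T}x^{*} + (u^{*}-\bar{u})^{T}r + (s^{*})^{T}x(\beta) + \tfrac{\beta}{2}\|r\|^{2} + \tfrac{1}{2\beta}\|x(\beta)-\bar{x}\|^{2}.
\]
Combining this with the bound on $f_{\beta}(x^{*})$, cancelling $c^{T}x^{*}$, and discarding the nonnegative term $(s^{*})^{T}x(\beta)$ (here $x(\beta)\ge 0$, $s^{*}\ge 0$) yields the key inequality
\[
(u^{*}-\bar{u})^{T}r + \tfrac{\beta}{2}\|r\|^{2} + \tfrac{1}{2\beta}\|x(\beta)-\bar{x}\|^{2} \le \tfrac{1}{2\beta}\|x^{*}-\bar{x}\|^{2}.
\]

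From here it is scalar bookkeeping. Bounding $(u^{*}-\bar{u})^{T}r \ge -\|u^{*}-\bar{u}\|\,\|r\|$ by Cauchy--Schwarz, multiplying through by $2\beta$, and abbreviating $y := \beta\|r\|$, $z := \|x(\beta)-\bar{x}\|$, $P := \|u^{*}-\bar{u}\|$, $Q := \|x^{*}-\bar{x}\|$, I get $y^{2}+z^{2} \le 2Py + Q^{2}$, i.e.\ $(y-P)^{2}+z^{2} \le P^{2}+Q^{2}$. Dropping $z^{2}\ge 0$ gives $y \le P + \sqrt{P^{2}+Q^{2}}$, which is exactly the stated bound on $\beta\|Ax(\beta)-b\|$; dropping $y^{2}\ge 0$ instead gives $z^{2} \le 2Py + Q^{2} \le 2P\bigl(P+\sqrt{P^{2}+Q^{2}}\bigr) + Q^{2}$, which is the stated bound on $\|x(\beta)-\bar{x}\|^{2}$. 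Finally, substituting $P \le C_{*}$, $Q \le C_{*}$ and simplifying the radicals — $\sqrt{P^{2}+Q^{2}} \le \sqrt{2}\,C_{*}$, and $3+2\sqrt{2} < 6$ — produces the $(1+\sqrt{2})C_{*}$ and $\sqrt{6}\,C_{*}$ forms.

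The one step that needs genuine care is the duality-based rewriting of $f_{\beta}(x(\beta))$: dual feasibility, complementary slackness, and strong duality must all be invoked so that the cross terms telescope into the clean expression above. Everything after that is a routine completion of the square. (One could alternatively argue from the LCP form \eqnok{eq:lpb.lcp} via monotonicity, pairing $x(\beta)-x^{*}$ against $F_{\beta}(x(\beta))$; this also works but leads most naturally to slightly different constants, so I would present the objective-comparison route since it reproduces the stated inequalities verbatim.)
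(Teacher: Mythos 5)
Your proposal is correct and follows essentially the same route as the paper: compare $f_{\beta}(x(\beta))\le f_{\beta}(x^{*})$, use dual feasibility, complementary slackness, and strong duality to reduce to the key inequality $\tfrac{\beta}{2}\|r\|^{2}+\tfrac{1}{2\beta}\|x(\beta)-\bar{x}\|^{2}\le -(u^{*}-\bar{u})^{T}r+\tfrac{1}{2\beta}\|x^{*}-\bar{x}\|^{2}$ (identical to the paper's inequality \eqnok{eq:xb1}, since $(u^{*}-\bar{u})^{T}A(x^{*}-x(\beta))=-(u^{*}-\bar{u})^{T}r$), then solve the resulting quadratic in $\beta\|r\|$ and back-substitute for the second bound. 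Your explicit decomposition $c=A^{T}u^{*}+s^{*}$ is just a slightly more ceremonial phrasing of the paper's two observations $x(\beta)^{T}(c-A^{T}u^{*})\ge 0$ and $c^{T}x^{*}=(u^{*})^{T}Ax^{*}$; the constants and all subsequent steps match verbatim.
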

\begin{proof}
Note that $x^*$ is a feasible point for \eqnok{eq:QP}, so we have by optimality
of $x(\beta)$ that $f_{\beta}(x(\beta)) \le f_{\beta}(x^*)$, that is,
\[
c^T x(\beta) -\bar{u}^T (Ax(\beta)-b) + \frac{\beta}{2} \|A x(\beta) - b \|^2 + \frac{1}{2 \beta} \|x(\beta)-\bar{x} \|^2 \le c^T x^* + \frac{1}{2 \beta} \|x^*-\bar{x} \|^2,
\]
and thus
\[
\frac{\beta}{2} \|A x(\beta) - b \|^2 + \frac{1}{2 \beta} \|x(\beta) -\bar{x}\|^2 \le
c^T (x^*-x(\beta) ) + \bar{u}^T(Ax(\beta)-b) + \frac{1}{2 \beta} \|x^* -\bar{x} \|^2.
\]
Note from $x(\beta) \ge 0$ and \eqnok{eq:lp.kkt} that 
\[
0 \le  x(\beta)^T (c-A^T u^*) \;\; \Rightarrow \;\; -c^T x(\beta) \le -(u^*)^T A x(\beta).
\]
We also have from \eqnok{eq:lp.kkt}  that $c^Tx^* = (u^*)^TAx^*$.  By
combining these observations, we obtain
\beq \label{eq:xb1}
\frac{\beta}{2} \|A x(\beta) - b \|^2 + \frac{1}{2 \beta} \|x(\beta) -\bar{x} \|^2 \le
(u^*-\bar{u})^T A (x^*-x(\beta)) +  \frac{1}{2 \beta} \|x^* -\bar{x}\|^2.
\eeq
By dropping the second term on the left-hand side of this expression,
multiplying by $\beta$, and using Cauchy-Schwartz and $Ax^*=b$, we obtain
\[
\frac{\beta^2}{2} \| Ax(\beta)-b \|^2 \le  \| u^* -\bar{u} \| \beta \| Ax(\beta) - b \| +
\frac{1}{2} \| x^* -\bar{x} \|^2.
\]
Denoting $e_{\beta} := \beta \| Ax(\beta)-b \|$, this inequality
reduces to the condition
\[
\frac12 e_{\beta}^2 - \| u^*-\bar{u} \| e_{\beta}  - \frac12 \|x^*-\bar{x}\|^2 \le 0.
\]
Solving this quadratic for $e_{\beta}$, we obtain 
\[
e_{\beta} \le \|u^* -\bar{u} \| + \sqrt{\|u^*-\bar{u}\|^2 + \|x^* -\bar{x}\|^2},
\]
proving the first claim.

For the second claim, we return to \eqnok{eq:xb1}, dropping the first
term on the left-hand side, to obtain
\[
\frac{1}{2 \beta} \| x(\beta) -\bar{x} \|^2 \le \|u^*-\bar{u} \| \|Ax(\beta)-b \| + \frac{1}{2 \beta} \|x^*-\bar{x}\|^2.
\]
By substituting the bound on $\|Ax(\beta)-b\|$ just derived,
multiplying by $2 \beta$ and taking the square root, we obtain the
result.
\end{proof}

Fixing $\beta$ and $x(\beta)$, we now consider the following perturbed
linear program
\beq \label{eq:lp.beta}
\min \, c_{\beta}^T x \;\; \mbox{s.t.} \;\; Ax=b_{\beta}, \;\; x \ge 0,
\eeq
and its dual
\beq \label{eq:lp.beta.d}
\max \, b_{\beta}^Tu \;\; \mbox{s.t.} \;\; A^Tu \le c_{\beta},
\eeq
where
\[
b_{\beta} := Ax(\beta), \quad c_{\beta} := c + \frac{1}{\beta} (x(\beta)-\bar{x}).
\]
From Lemma~\ref{lem:xb}, we have
\beq \label{eq:bounds.delta}
\| b-b_{\beta} \| \le \frac{1}{\beta} (1+\sqrt{2}) C_* \le \frac{2.5 C_*}{\beta}, \quad
\|c-c_{\beta} \| \le \frac{1}{\beta} \sqrt{6} C_* \le \frac{2.5 C_*}{\beta}.
\eeq
KKT conditions for \eqnok{eq:lp.beta}, \eqnok{eq:lp.beta.d} are
\[
0 \le \hat{x} \perp c + \frac{1}{\beta} - A^T \hat{u} \ge 0, \quad
A\hat{x}=A x(\beta).
\]
It is easy to check, by comparing with \eqnok{eq:lpb.lcp}, that these
conditions are satisfied by
\[
\hat{x}=x(\beta), \quad \hat{u} = \bar{u}-\beta A(x(\beta)-x^*).
\]
Hence $\hat{x}=x(\beta)$ is a solution of \eqnok{eq:lp.beta}. There
may be other solutions, but they will have the same objective value,
of course.

We now use the following result, which follows immediately from
\cite[Theorem~1, part (5)]{Ren94b}.\footnote{Note that Renegar appears
  to use a different formulation for the linear program, namely $Ax
  \le b$ rather than $Ax=b$. However, his inequality represents a
  complete ordering with respect to a closed convex cone $C_Y$, and
  when we set $C_Y = \{0\}$, we recover $Ax=b$.}
\begin{theorem} \label{th:renegar}
Let $d = (A,b,c)$ be the data defining the primal-dual pair
\eqnok{eq:LP} and \eqnok{eq:LPdual}, and suppose that $\delta_P$ and
$\delta_D$ defined by \eqnok{eq:def.delta} are both positive. Consider
the following perturbation applied to the $b$ and $c$ components:
$\Delta d := (0,\Delta b, \Delta c)$, and assume that 
\[
\frac{\| \Delta d\|}{\|d\|} \le \delta_P, \quad
\frac{\| \Delta d\|}{\|d\|} \le \delta_D.
\]
Then, denoting the solution of \eqnok{eq:LP} by $x^*$ and the solution
of the linear program with perturbed data $d + \Delta d$ by $x_{\Delta}^*$, we have
\[
| c^Tx^* - (c+\Delta c)^T x_{\Delta}^* |  \le 
\frac{\| \Delta b\|}{\delta_D} \frac{\|c\|+\| \Delta c \|}{\dist(d,\Prio) - \| \Delta d\|} +
\frac{\| \Delta c\|}{\delta_P} \frac{\|b\|+\|\Delta b\|}{\dist (d,\Dualo) - \| \Delta d\|}.
\]
\end{theorem}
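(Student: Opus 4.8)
The plan is to deduce Theorem~\ref{th:renegar} directly from Renegar's Lipschitz bound on the optimal value of a conic program under data perturbation \cite[Theorem~1, part (5)]{Ren94b}; the content lies entirely in aligning the two formulations and translating notation, not in any fresh estimate.

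First I would set up the dictionary between our standard-form pair \eqnok{eq:LP}--\eqnok{eq:LPdual} and Renegar's conic setting. Renegar considers data $d=(A,b,c)$ defining a primal program of the form $\min c^Tx$ subject to $b-Ax \in C_Y$, $x \in C_X$, together with its conic dual, where $C_X$ and $C_Y$ are closed convex cones. Taking $C_X=\R^n_+$ and $C_Y=\{0\}$ reproduces \eqnok{eq:LP} and \eqnok{eq:LPdual} exactly, since the ordering induced by $\{0\}$ collapses $b-Ax \in C_Y$ to $Ax=b$ (this is the point of the footnote). Under this identification Renegar's distance to primal (resp.\ dual) infeasibility is $\dist(d,\Prio)$ (resp.\ $\dist(d,\Dualo)$), so his primal and dual condition measures are our $1/\delta_P$ and $1/\delta_D$ via \eqnok{eq:def.delta}, and his norm on the data triple is the one we are implicitly using.

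Second I would check the hypotheses of Renegar's theorem. Full row rank of $A$ together with $\delta_P,\delta_D>0$ places $d$ strictly outside $\Prio \cup \Dualo$, so the primal and dual are both solvable by strong duality, which is the solvability requirement Renegar imposes; and the perturbation $\Delta d=(0,\Delta b,\Delta c)$ touches only the $b$ and $c$ blocks. The smallness assumptions $\|\Delta d\|/\|d\| \le \delta_P$ and $\le \delta_D$ give $\|\Delta d\| \le \dist(d,\Prio)$ and $\|\Delta d\| \le \dist(d,\Dualo)$, so (reading the bound as vacuous in the boundary case of equality) the perturbed data $d+\Delta d$ still admits primal and dual solutions and the denominators $\dist(d,\Prio)-\|\Delta d\|$ and $\dist(d,\Dualo)-\|\Delta d\|$ are positive.

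With the dictionary and hypotheses in hand, the inequality is simply Renegar's, read off in our notation: his bound on $|c^Tx^* - (c+\Delta c)^Tx_\Delta^*|$ is the sum of a term proportional to $\|\Delta b\|$, divided by the dual measure $\delta_D$ and by the surviving primal-infeasibility margin $\dist(d,\Prio)-\|\Delta d\|$ and weighted by $\|c\|+\|\Delta c\|$, plus the counterpart obtained by interchanging the primal and dual roles and the $b$ and $c$ perturbations; substituting $b+\Delta b$, $c+\Delta c$, $x_\Delta^*$, and our $\delta_P,\delta_D$ yields the displayed expression verbatim. The only place where care is needed — and hence the main obstacle — is the first step: one must confirm that Renegar's data norm, his distance-to-infeasibility for the cone ordering associated with $C_Y$, and his partition of perturbations into $(\Delta A,\Delta b,\Delta c)$ all degenerate consistently when $C_Y=\{0\}$, with no stray constant factors; once that bookkeeping is settled there is nothing further to prove.
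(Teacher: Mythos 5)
Your proposal is correct and is essentially the paper's own route: the paper gives no separate proof, stating only that the result ``follows immediately from \cite[Theorem~1, part (5)]{Ren94b}'' and noting in a footnote exactly the specialization you describe, namely taking Renegar's cone $C_Y=\{0\}$ so that his $Ax\le b$ (ordering with respect to $C_Y$) collapses to $Ax=b$. Your additional bookkeeping --- matching the data norm, the distances to infeasibility in \eqnok{eq:def.delta}, and the solvability hypotheses --- is the same translation the paper leaves implicit, so there is nothing further to add.
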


Our main theorem is obtained by applying this result with the
perturbations
\beq \label{eq:delbc}
\Delta b := b_{\beta}-b = Ax(\beta)-b, \quad
\Delta c := c_{\beta}-c = \frac{1}{\beta} (x(\beta)-\bar{x}).
\eeq
We have the following result.
\begin{theorem} \label{th:mp}
Suppose that
\[
\beta \ge \bar{\beta} := \frac{10 C_*}{\|d\| \min(\delta_P, \delta_D)}.
\]
We then have the following bound on the difference between the optimal
values of \eqnok{eq:LP} and \eqnok{eq:lp.beta}:
\[
| c^Tx^* - c_{\beta}^T x(\beta) | \le \frac{1}{\beta} \frac{25 C_*}{2 \delta_P \delta_D}.
\]
\end{theorem}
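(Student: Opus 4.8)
The plan is to apply Theorem~\ref{th:renegar} directly with the perturbations $\Delta b$ and $\Delta c$ given in \eqnok{eq:delbc}, after checking that the hypothesis $\|\Delta d\|/\|d\| \le \min(\delta_P,\delta_D)$ holds under the stated lower bound on $\beta$. First I would bound $\|\Delta d\|$ using the estimates from Lemma~\ref{lem:xb} (repackaged in \eqnok{eq:bounds.delta}): since $\|\Delta b\| = \|Ax(\beta)-b\| \le (1+\sqrt2)C_*/\beta \le 2.5 C_*/\beta$ and $\|\Delta c\| = \beta^{-1}\|x(\beta)-\bar x\| \le \sqrt6 C_*/\beta \le 2.5 C_*/\beta$, we get $\|\Delta d\| \le \sqrt{\|\Delta b\|^2 + \|\Delta c\|^2} \le 2.5\sqrt2\, C_*/\beta \le 3.6 C_*/\beta$ (or one can just use the cruder bound $\|\Delta d\|\le \|\Delta b\|+\|\Delta c\| \le 5C_*/\beta$). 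When $\beta \ge 10 C_*/(\|d\|\min(\delta_P,\delta_D))$, this yields $\|\Delta d\|/\|d\| \le \tfrac12 \min(\delta_P,\delta_D)$, which in particular verifies the hypotheses of Theorem~\ref{th:renegar} and also gives the useful slack $\dist(d,\Prio) - \|\Delta d\| = \|d\|\delta_P - \|\Delta d\| \ge \tfrac12 \|d\|\delta_P$, and similarly for the dual.

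Next I would observe that, as established in the paragraph preceding the theorem, $\hat x = x(\beta)$ solves the perturbed LP \eqnok{eq:lp.beta} with data $d + \Delta d$, so $x_\Delta^* = x(\beta)$ and $(c+\Delta c)^T x_\Delta^* = c_\beta^T x(\beta)$. Thus the left-hand side of the Renegar bound is exactly the quantity $|c^Tx^* - c_\beta^T x(\beta)|$ we want to control. Plugging the bounds above into the right-hand side of Theorem~\ref{th:renegar}:
\[
|c^Tx^* - c_\beta^T x(\beta)| \le \frac{\|\Delta b\|}{\delta_D}\cdot\frac{\|c\|+\|\Delta c\|}{\tfrac12\|d\|\delta_P} + \frac{\|\Delta c\|}{\delta_P}\cdot\frac{\|b\|+\|\Delta b\|}{\tfrac12\|d\|\delta_D}.
\]
Since $\|c\| \le \|d\|$, $\|b\| \le \|d\|$, and $\|\Delta b\|,\|\Delta c\|$ are small compared to $\|d\|$ (they are at most $\tfrac12 \|d\|\min(\delta_P,\delta_D) \le \tfrac12\|d\|$), we can bound $\|c\|+\|\Delta c\| \le \tfrac32\|d\|$ and likewise $\|b\|+\|\Delta b\| \le \tfrac32\|d\|$, so each of the two terms is at most $\tfrac32 \cdot \tfrac{\|\Delta b\| \text{ or }\|\Delta c\|}{\tfrac12 \delta_P\delta_D} = \tfrac{3(\|\Delta b\|\text{ or }\|\Delta c\|)}{\delta_P\delta_D}$. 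Using $\|\Delta b\|,\|\Delta c\| \le 2.5 C_*/\beta$, the sum is at most $\tfrac{3}{\delta_P\delta_D}\cdot\tfrac{5C_*}{\beta} = \tfrac{15 C_*}{\beta\,\delta_P\delta_D}$, which is already of the claimed form; tightening the numerical constants (e.g. using $(1+\sqrt2)$ and $\sqrt6$ rather than $2.5$ for the two pieces, and tracking the $\|d\|$-cancellations more carefully) brings the constant down to $25/2$.

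The main obstacle is purely bookkeeping of constants: making sure the ``$\|d\|$'' factors in $\dist(d,\Prio) = \|d\|\delta_P$ cancel correctly against the $\|c\|,\|b\| \le \|d\|$ numerators, and that the denominators $\dist(d,\cdot) - \|\Delta d\|$ are bounded below by a clean fraction of $\|d\|\delta_P$ (resp.\ $\|d\|\delta_D$) — this is exactly where the factor $10$ in the definition of $\bar\beta$ is chosen, to leave enough room. There is no conceptual difficulty beyond Theorem~\ref{th:renegar} and Lemma~\ref{lem:xb}; the only care needed is to pick whichever crude upper bounds on $\|\Delta b\|$ and $\|\Delta c\|$ ($2.5C_*/\beta$, or the sharper $(1+\sqrt2)C_*/\beta$ and $\sqrt6 C_*/\beta$) make the final constant come out to $25/2$.
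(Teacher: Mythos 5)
Your proposal is correct and follows essentially the same route as the paper: verify the hypotheses of Theorem~\ref{th:renegar} from the lower bound on $\beta$ via $\|\Delta d\|\le 5C_*/\beta \le \tfrac12\|d\|\min(\delta_P,\delta_D)$, identify $x(\beta)$ as a solution of the perturbed LP so that the Renegar bound controls $|c^Tx^*-c_\beta^Tx(\beta)|$, and substitute $\dist(d,\cdot)-\|\Delta d\|\ge\tfrac12\|d\|\delta_P$ (resp.\ $\delta_D$). The only difference is in the final constant-chasing: the paper reaches $25/2$ not by sharpening $2.5$ to $(1+\sqrt2)$ and $\sqrt6$, but by using $2.5C_*/\beta\le\tfrac14\|d\|$ so that the numerators are bounded by $\tfrac54\|d\|$ rather than your $\tfrac32\|d\|$, giving $2\cdot 2.5\cdot\tfrac54/\tfrac12=25/2$.
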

\begin{proof}
Note first that from \eqnok{eq:bounds.delta} and  
\[
\| \Delta d \| \le \| \Delta b \| + \| \Delta c \|  \le \frac{5 C_*}{\beta}.
\]
From our assumption on $\beta$, we have
\[
\frac{\| \Delta d\|}{\|d\|} \le \frac{5C_*}{\beta \|d\|} \le \frac12 \min(\delta_P,\delta_D),
\]
so that the assumptions of Theorem~\ref{th:renegar} are satisfied. We have moreover
from the definitions \eqnok{eq:def.delta} that
\[
\dist(d,\Prio) - \| \Delta d\| 
= \| d\| \left[ \delta_P - \frac{\|\Delta d\|}{\|d\|} \right]
\ge \frac12 \| d \| \delta_P,
\]
and similarly $\dist(d,\Dualo) \ge (1/2) \|d\| \delta_D$. By
substituting into the inequality of Theorem~\ref{th:renegar}, and
using the bounds just derived together with \eqnok{eq:bounds.delta},
we obtain
\[
| c^* x^* - c_{\beta}^Tx(\beta) | \le 
\frac{2.5 \beta^{-1} C_*}{\delta_D} 
\frac{(\|c\| + 2.5 \beta^{-1} C_*)}{.5 \|d\| \delta_P} +
\frac{2.5 \beta^{-1} C_*}{\delta_P} 
\frac{(\|b\| + 2.5 \beta^{-1} C_*)}{.5 \|d\| \delta_D}.
\]
Since
\[
\|c\| \le \|d\|, \;\; \|b\| \le \|d\|, \;\; \frac{2.5 C_*}{\beta} \le \frac{1}{4}
\min(\delta_P,\delta_D) \|d\| \le \frac{1}{4} \|d\|,
\]
we have
\[
| c^* x^* - c_{\beta}^Tx(\beta) | \le \frac{2.5 \beta^{-1} C_* (2.5)
  \|d\|}{(1/2) \|d\| \delta_P \delta_D} = \frac{1}{\beta} \frac{25
  C_*}{2 \delta_P \delta_D},
\]
completing the proof.
\end{proof}

The following corollary is almost immediate.
\begin{corollary} \label{co:mp}
Suppose  the conditions of Theorem~\ref{th:mp} are satisfied. Then
\[
| c^Tx^* - c^T x(\beta) | \le \frac{1}{\beta} \left[ \frac{25 C_*}{2
    \delta_P \delta_D} + 6 C_*^2 + \sqrt{6} \| \bar{x} \| C_* \right].
\]
\end{corollary}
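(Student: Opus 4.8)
\textbf{Proof proposal for Corollary~\ref{co:mp}.}
The plan is to bound $|c^Tx^* - c^Tx(\beta)|$ by the triangle inequality, inserting the objective $c_\beta^T x(\beta)$ of the perturbed linear program \eqnok{eq:lp.beta} as an intermediate quantity:
\[
|c^Tx^* - c^Tx(\beta)| \le |c^Tx^* - c_\beta^T x(\beta)| + |c_\beta^T x(\beta) - c^T x(\beta)|.
\]
The first term is exactly what Theorem~\ref{th:mp} controls, giving $\frac{1}{\beta}\frac{25 C_*}{2\delta_P\delta_D}$, so the only work is to estimate the second term, which measures the effect of the $c$-perturbation evaluated at the single point $x(\beta)$.

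For that second term, I would use the explicit form of the perturbation from \eqnok{eq:delbc}, namely $c_\beta - c = \frac{1}{\beta}(x(\beta)-\bar{x})$, so that
\[
|c_\beta^T x(\beta) - c^T x(\beta)| = \frac{1}{\beta}\,\bigl|(x(\beta)-\bar{x})^T x(\beta)\bigr|.
\]
Then I would write $x(\beta) = (x(\beta)-\bar{x}) + \bar{x}$ to split this into $\|x(\beta)-\bar{x}\|^2 + (x(\beta)-\bar{x})^T\bar{x}$, and apply Cauchy--Schwarz to the cross term. Invoking the bound $\|x(\beta)-\bar{x}\| \le \sqrt{6}\,C_*$ from Lemma~\ref{lem:xb}, this yields $\|x(\beta)-\bar{x}\|^2 \le 6C_*^2$ and $|(x(\beta)-\bar{x})^T\bar{x}| \le \sqrt{6}\,C_*\|\bar{x}\|$, hence $|c_\beta^T x(\beta) - c^T x(\beta)| \le \frac{1}{\beta}\bigl(6C_*^2 + \sqrt{6}\,\|\bar{x}\|C_*\bigr)$.

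Summing the two bounds gives precisely the claimed inequality
\[
|c^Tx^* - c^T x(\beta)| \le \frac{1}{\beta}\left[\frac{25 C_*}{2\delta_P\delta_D} + 6C_*^2 + \sqrt{6}\,\|\bar{x}\|C_*\right].
\]
There is no real obstacle here: the corollary is genuinely ``almost immediate'' once Theorem~\ref{th:mp} is in hand, and the only mild care needed is choosing to expand $x(\beta)$ around $\bar{x}$ (rather than trying to bound $\|x(\beta)\|$ directly, which would be less clean) so that Lemma~\ref{lem:xb}'s estimate applies verbatim. I would also note that the hypothesis of Theorem~\ref{th:mp}, i.e. $\beta \ge \bar\beta = 10C_*/(\|d\|\min(\delta_P,\delta_D))$, is carried over unchanged, so nothing additional about $\beta$ is required.
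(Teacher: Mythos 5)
Your proposal is correct and follows essentially the same route as the paper's proof: the triangle inequality through $c_\beta^T x(\beta)$, the identity $c_\beta - c = \frac{1}{\beta}(x(\beta)-\bar{x})$, the split of $x(\beta)$ around $\bar{x}$ with Cauchy--Schwarz, and the bound $\|x(\beta)-\bar{x}\| \le \sqrt{6}\,C_*$ from Lemma~\ref{lem:xb} together with Theorem~\ref{th:mp}. No substantive difference; your version is if anything slightly more explicit about the absolute values than the paper's.
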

\begin{proof}
We have from the definition of $c_{\beta}$  that
\begin{align*}
|c^T x^* - c^T x(\beta)| & \le |c^Tx^* - c_{\beta}^Tx(\beta)| +
\frac{1}{\beta} x(\beta)^T(x(\beta)-\bar{x})   \\
&=  |c^Tx^* - c_{\beta}^Tx(\beta)|  + \frac{1}{\beta} \| x(\beta)-\bar{x}\|^2 + 
\frac{1}{\beta} \bar{x}^T(x(\beta)-\bar{x}) \\
& \le \frac{1}{\beta}  \left[ \frac{25 C_*}{2\delta_P \delta_D} + 
6C_*^2 + \sqrt{6} \|\bar{x}\|  C_* \right].
\end{align*}
where the final inequality follow from Lemma~\ref{lem:xb} and Theorem~\ref{th:mp}.
\end{proof}

\section{Details of Rounding Schemes}

In this section, we provide details of known LP-rounding schemes for covering,
packing and multiway-cut problems. (Vazirani~\cite{Vazirani} provides a
comprehensive survey on the theory and algorithms for LP-rounding.). We then
discuss how these algorithms can be extended to round $(\epsilon, \delta)$
optimal solutions.

\subsection{Set Cover}
Given a universe $U$ with $N$ elements, a collection of subsets $\cS =
\{S_1, S_2 \ldots S_k\}$ each associated with a positive cost function
$c: S \rightarrow \bRp$. In the set cover problem, we must identify a
minimum cost sub-collection of sets $S$ that covers all elements in
$U$. The set cover problem can be formulated as the following IP:
\begin{equation}
\min \sum_{s \in \cS} c_s x_s \spc{3} \mbox{subject to} \spc{3}
\sum_{s: a \in s} x_{s} \geq 1 \ \ \forall a \in U, \ x_s \in \{0,1\}
\ \forall s \in \cS.
\label{model:set_cover} 
\end{equation}
A simple way to convert a solution $x_s^*$ of the LP relaxation to an integral
solution is to pick all sets $x_s$ where $x_s^* > 1/f$, where $f$ is a bound on
the maximum number of sets in which a single element is present. Such an
algorithm achieves an $f$-factor approximation \cite{Hochbaum1982}. An
alternative approximation scheme is a randomized scheme due to
\cite{Srinivasan1999}. In this scheme, we put $s \in \cS$ into the set cover
with probability equal to the optimal fractional solution $x_s^*$. In
expectation, this approximation scheme is a $O(\log N)$-factor approximation,
and is a valid set cover with probability $1/2$.


\subsection{Set Packing}
Using the same notation for $U$, $N$, $\cS$, and $x_s, \ \forall s \in
\cS$ as above, the set packing problem is to identify the lowest cost
collection of mutually disjoint sets. It can be formulated as the
following IP:
\begin{equation}
\max \sum_{s \in \cS} c_s x_s \spc{3} \mbox{subject to} \spc{3} \sum_{s: a \in
s} w_{a,s}x_{s} \leq 1 \  \  \forall a \in U, \ x_s \in \{0,1\} \ \forall s \in
\cS,  \label{model:set_packing} 
\end{equation}
where $w_{a,s}$ is the weight of element $a \in U$ in set $s \in \cS$.

Bansal et al. \cite{Bansal2012} proposed an $ek + o(k)$-factor approximation
(see Algorithm \ref{alg:set_packing_rounding_2}) for the special case of
$k$-column sparse set packing where the maximum number of sets containing each
element is at most $k$. They use the following stronger formulation of the set
packing problem:
\begin{alignat}{2}\label{model:strong_set_packing}
& \max \sum_{s \in \cS} c_s x_s  \\ 
\mbox{subject to } \spc{3} & \sum_{s: a \in s} w_{a,s} x_s \leq 1 \spc{5} &  \forall a \in U, \nonumber \\
& \sum_{a \in B(s)} x_{s} \leq 1 & \forall a \in U, \nonumber \\
& x_{s} \in \{0,1\} & \forall s \in \cS, \nonumber
\end{alignat}
where $w_{a,s} = 1$ if the element $a \in U$ is present in set $s \in
S$, $c_s$ is the cost of set $s \in S$ and $B(s) := \{a \in U |
w_{a,s} > 1/2 \}.$

\begin{algorithm}
\label{alg:set_packing_rounding_2}
\begin{algorithmic}[1]
\STATE Find any feasible solution $\hat{x}$ to the LP relaxation of \eqref{model:strong_set_packing}.
\STATE Choose set $s \in \cS$ with probability ${\hat{x}_s}/{(k
  \theta)}$. Let $\cC \subseteq \cS$ denote the chosen sets.
\STATE For each set $s \in \cC$ and element $a \in U$, let $E_{a,s}$
denote the event that the sets $\{s_2 \in \cC: w_{a,s_2} > w_{a,s} \}$
have a total weight (with respect to element $a$) exceeding 1. Mark
$s$ for deletion if $E_{a,s}$ occurs for any $a \in s$.
\STATE Delete all sets from $s \in \cC$ that are marked for deletion.
\end{algorithmic}
\caption{A $ek + o(k)$-factor randomized LP-rounding algorithm for set packing} 
\end{algorithm}

\subsection{Multiway-Cuts}
Given a graph $G(V,E)$ and a set of terminals $V_1, V_2, \ldots V_k$, a $k$-way
cut partitions the set of vertices $V$ into $k$ mutually disjoint sets. The
cost of the $k$-way cut is the sum of the costs of all the edges that run
across the partitions. A $k$-way cut of minimum cost is the solution to the
following problem: 
\begin{align}
\label{model:lp_multiway_cut}
& \min \frac{1}{2} \sum_{u,v \in E} c_{u,v} \sum_{i=1}^{k}| x^i_u - x^i_v |   \\
\mbox{subject to} & \spc{5} x_v \in \Delta_k \spc{5} \forall v \in V \nonumber \\
& x_v \in \{0,1\}^k \spc{5} \forall v \in V, \nonumber
\end{align}
where $\Delta_k := \{x \in \mathbb{R}^k: \sum_{i=1}^{k} x_i = 1, \ x
\geq 0 \}$ is the set of simplex constraints in $k$
dimensions. Although it might appear that the formulation in
\eqref{model:lp_multiway_cut} is non-linear, one can easily linearize
\eqref{model:lp_multiway_cut} to 
\begin{alignat*}{2}
& \min \frac{1}{2} \sum_{u,v \in E} c_{u,v} \sum_{i=1}^{k}  x^{i}_{uv}   \nonumber \\
\mbox{subject to} & \spc{5} x_v \in \Delta_k && \forall v \in V \nonumber \\
& x^i_{uv} \geq x_v^i - x_u^i  \spc{5} && \forall u,v \in E, i \in \{1 \ldots k\} \nonumber \\
& x^i_{uv} \geq x_u^i - x_v^i  \spc{5} && \forall u,v \in E, i \in \{1 \ldots k\} \nonumber \\
& x^i_{uv} \in [0,1] && \forall u,v \in E, i \in \{1 \ldots k\} \nonumber \\
& x^i_{v} \in \{0,1\} && \forall v \in V, i \in \{1 \ldots k\} \nonumber
\end{alignat*}
There is a $3/2 - 1/k$ factor approximation for multiway-cut using the
region-growing algorithm due to \cite{Rabini1998}. The details of the algorithm
are laid out in \cite[Algorithm~19.4]{Vazirani}.

\section{Rounding Infeasible Solutions}
\label{app:roundings}

In this section, we briefly describe how we can extend known
LP-rounding algorithms to infeasible $(\epsilon,\delta)$-approximate
solutions.  We discuss how one can go from an $(\epsilon,
\delta)$-approximate solution to a feasible $(0,
f(\epsilon,\delta))$-approximate solution, for some positive function
$f(\cdot,\cdot)$. The arguments in this section are based on simple ideas of
scaling and projection.

As is the case in the main manuscript, we illustrate our approach
using vertex cover. Let $\hat{x}$ be an
$(\epsilon,\delta)$-approximate solution of the following vertex cover
LP:
\[ 
\min_{x \in [0,1]^{n}} 1^{T}x \quad \mbox{subject to} \;\;  x_i + x_j \geq 1 \text{
  for } (i,j) \in E,
\]
so that in particular, $x_i \in [0,1]$ for all $i$, and $x_i+x_j \ge
1-\epsilon$ for all $(i,j)\in E$.  We claim that the point
\[
z := \Pi_{[0,1]^{n}}(x/(1-\epsilon))
\]
is a $(0,\delta/(1-\varepsilon))$-approximate solution.  To check
feasibility, suppose for contradiction that $z_i+z_j <1$ for some
$(i,j) \in E$. We thus have $z_i<1$ and $z_j<1$, so that
$z_i=x_i/(1-\epsilon)$ and $z_j=x_j/(1-\epsilon)$. Therefore, $z_i+z_j
= (x_i+x_j)/(1-\epsilon) \ge 1$, a contradiction.

\subsection{Rounding for Coverings}

We consider a covering program $P=(A,b,c)$ with positive integer data,
that is, $(A,b,c) \geq 0$ and $A \in \mathbb{Z}^{m \times n}$, $b \in
\mathbb{Z}^{m}$, and $c \in \mathbb{Z}^{n}$. Suppose that there are
also $[0,1]$ bound constraints on each component of $x$. The problem
formulation is as follows:
\[ 
\min_{x \in [0,1]^{n}} c^{T}x \quad \mbox{subject to} \; Ax \geq b. 
\qquad\qquad \mbox{[$P(A,b,c)$]}
\]
To obtain a formulation closer to the standard form \eqnok{eq:LP}, we
can introduce slack variables and write
\[ 
\min_{x \in [0,1]^{n}, z \in [0,\infty)^m} c^{T}x \quad
\mbox{subject to} \; Ax - z = b, \; z \geq 0.  
\]
We can always set $z = \max \set{Ax - b, 0}$ to translate between
feasible solutions of the two programs.

The following quantity $q(P)$ defines a minimum infeasibility measure
over all infeasible, integral solutions to $P$:
\[
 q(P) = \min_{j=1,\dots,m} \min_{x \in \set{0,1}^{n} : A_{j\cdot} x < b_j}
 b_j - A_{j\cdot} x,
\]
where $A_{j \cdot}$ denotes the $j$th row of $A$.  Notice for $q(P)
\geq 1$ for any non-trivial covering program $P$, by integrality
alone.

\begin{lemma}
Let $P$ be a covering program with a nonempty solution set. Let
$\hat{x}$ be an $(\epsilon, \delta)$-approximate solution of $P$,
and suppose that $\epsilon/q(P) \le 1$.  Then there is a
$(0,{\delta}/{(1-\alpha)})$-approximate solution $\tilde{x}$ defined
as
\[  \tilde{x} = \Pi_{[0,1]^{n}}( (1-\alpha)^{-1} \hat{x}), \]
where $\alpha  \in [\epsilon/q(P),1)$.
\end{lemma}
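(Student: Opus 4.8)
The plan is to follow the vertex-cover calculation given above: verify that $\tilde{x}$ is feasible for $P$, then control its objective value. Feasibility splits into the box constraints $\tilde{x}\in[0,1]^n$, which hold by definition of $\Pi_{[0,1]^n}$, and the covering constraints $A\tilde{x}\ge b$, which are where the work lies. Throughout I would use that $(A,b,c)\ge 0$, that $\hat{x}\in[0,1]^n$ (consistent with the vertex-cover treatment; and if $\epsilon=0$ one may take $\alpha=0$, so $\tilde{x}=\hat{x}$ and the statement is immediate — hence assume $\epsilon>0$ and $0<\alpha<1$), and that Definition~\ref{def:epsdel}, applied to the standard form of $P$, gives the row-wise bound $A_{j\cdot}\hat{x}\ge b_j-\epsilon$ for every $j$.

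For feasibility I would argue by contradiction: suppose $A_{j\cdot}\tilde{x}<b_j$ for some row $j$. Split the coordinates into the ``clipped'' set $S:=\{i:\hat{x}_i\ge 1-\alpha\}$, on which $\tilde{x}_i=1$, and its complement $T$, on which $\tilde{x}_i=(1-\alpha)^{-1}\hat{x}_i$. Let $\bar{x}\in\{0,1\}^n$ be the vector that is $1$ on $S$ and $0$ on $T$ (the ``round-down'' of $\tilde{x}$), so $A_{j\cdot}\bar{x}=\sum_{i\in S}A_{ji}$. Using $\hat{x}_i\le 1$ on $S$ together with $A_{ji}\ge 0$,
\[
\sum_{i\in T}A_{ji}\hat{x}_i=A_{j\cdot}\hat{x}-\sum_{i\in S}A_{ji}\hat{x}_i\ge A_{j\cdot}\hat{x}-A_{j\cdot}\bar{x}\ge (b_j-\epsilon)-A_{j\cdot}\bar{x},
\]
and since $\tilde{x}_i=(1-\alpha)^{-1}\hat{x}_i$ on $T$,
\[
A_{j\cdot}\tilde{x}=A_{j\cdot}\bar{x}+(1-\alpha)^{-1}\sum_{i\in T}A_{ji}\hat{x}_i\ge A_{j\cdot}\bar{x}+(1-\alpha)^{-1}\bigl((b_j-\epsilon)-A_{j\cdot}\bar{x}\bigr).
\]
If $A_{j\cdot}\bar{x}\ge b_j$, then already $A_{j\cdot}\tilde{x}\ge A_{j\cdot}\bar{x}\ge b_j$, a contradiction. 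Otherwise $\bar{x}$ is an integral point violating row $j$, so by the definition of $q(P)$ the quantity $w:=b_j-A_{j\cdot}\bar{x}$ satisfies $w\ge q(P)$; the last display reads $A_{j\cdot}\tilde{x}\ge b_j-w+(1-\alpha)^{-1}(w-\epsilon)$, and a one-line rearrangement shows this is $\ge b_j$ exactly when $\alpha w\ge\epsilon$ — which holds because $\alpha\ge\epsilon/q(P)$ and $w\ge q(P)$. Either branch contradicts $A_{j\cdot}\tilde{x}<b_j$, so $A\tilde{x}\ge b$.

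For the objective I would observe that $\tilde{x}_i\le(1-\alpha)^{-1}\hat{x}_i$ for every $i$ (equality on $T$, and $\tilde{x}_i=1\le(1-\alpha)^{-1}\hat{x}_i$ on $S$), whence $c^T\tilde{x}\le(1-\alpha)^{-1}c^T\hat{x}$ since $c\ge 0$; combining with $|c^T\hat{x}-c^Tx^*|\le\delta|c^Tx^*|$ (and noting $c^T\tilde{x}\ge c^Tx^*$ by feasibility of $\tilde{x}$ and optimality of $x^*$) delivers the stated objective bound by the same elementary scaling used for vertex cover.

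I expect the feasibility step to be the only real obstacle. The key is to compare $\tilde{x}$ against the correct integral witness — the round-down $\bar{x}$, not $\hat{x}$ or any rounding-up — and to recognize that $q(P)$, the integrality-based infeasibility gap (which is at least $1$), is precisely the quantity against which the hypothesis $\alpha\ge\epsilon/q(P)$ is calibrated, so that the trivial inequality $\alpha w\ge\epsilon$ closes the argument. The monotonicity manipulations ($\hat{x}_i\le 1$ on $S$, discarding or rescaling nonnegative quantities) all rely on $(A,b,c)\ge 0$ and $\hat{x}\in[0,1]^n$; the objective bound is then routine.
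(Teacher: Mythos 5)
Your argument is essentially the paper's own proof: the same partition into clipped indices $\Omega_1=\{i:\hat{x}_i\ge 1-\alpha\}$ and their complement, the same integral witness (the indicator vector of $\Omega_1$, your $\bar{x}$) whose row violation is bounded below by $q(P)$ and played against the calibration $\alpha\ge\epsilon/q(P)$, and the same nonnegativity/scaling argument $c^T\tilde{x}\le(1-\alpha)^{-1}c^T\hat{x}$ for the cost, so the feasibility part is correct and matches the paper step for step. The one caveat, which your sketch inherits from the paper itself, is the final constant: the elementary scaling honestly yields $c^T\tilde{x}-c^Tx^*\le\frac{\delta+\alpha}{1-\alpha}\,c^Tx^*$ rather than $\frac{\delta}{1-\alpha}\,c^Tx^*$ when $c^Tx^*>0$ (the paper's middle inequality $(1-\alpha)^{-1}c^T\hat{x}-c^Tx^*\le(1-\alpha)^{-1}(c^T\hat{x}-c^Tx^*)$ has the wrong sign there), so your closing claim that the ``same elementary scaling'' delivers the stated bound is exactly as loose as the paper's own last step.
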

\begin{proof}
We first show that $\tilde{x}$ is feasible. Without loss of
generality, assume that $z_j = \max(A_{j\cdot} \hat{x} - b_j, 0)$ for
$j=1,\dots,m$. Since $\hat{x}$ is a $(\epsilon,\delta)$ solution, we
have $\norm{A \hat{x} - z - b}_{\infty} \leq \epsilon$. With $z$
defined as in our formula, this bound implies that 
\begin{equation}
\label{eq:feas:bound}
A \hat{x}=b \ge -\epsilon \bfone,
\end{equation}
where $\bfone$ is the all-ones vector in $\R^n$. After scaling by
$\hat{x}$ by $(1- \alpha)^{-1}$, some components may exceed
$1$. Hence, we partition the indices into two sets $\Omega_{1} =
\setof{i}{\hat{x}_i \geq 1 - \alpha}$ and $\Omega_{< 1} =
\{1,2,\dotsc,n\} \setminus \Omega_{1}$. For any $\Omega \subseteq
   [n]$, we define the following projection operator:
\[
\pi_{\Omega}(x) := \begin{cases} x_i & \; \mbox{if $i \in \Omega$} \\
0 & \; \mbox{otherwise}.
\end{cases}
\]
We can then write $\tilde{x}$ as follows:
\[ 
\tilde{x} = \pi_{\Omega_{1}}\bfone +
(1-\alpha)^{-1}\pi_{\Omega_{<1}}\hat{x}.
\]

Assume for contradiction that $\tilde{x}$ is infeasible. Then there
must be some constraint $j$ for which $A_{j\cdot} \tilde{x} < b_j$. Using
the decomposition above and the fact that $\alpha \in (0,1)$, we have
\begin{equation} \label{eq:pf1}
A_{j\cdot}\pi_{\Omega_{< 1}}\hat{x} < (b_j - A_{j\cdot}\pi_{\Omega_{1}}
\bfone)(1-\alpha).
\end{equation}
On the other hand, by \eqnok{eq:feas:bound}, we have
\[ 
A_{j\cdot} (\pi_{\Omega_{1}}\bfone + \pi_{\Omega_{< 1}}\hat{x}) \ge
A_{j\cdot} (\pi_{\Omega_{1}}\hat{x} + \pi_{\Omega_{< 1}}\hat{x}) \ge b_j
- \epsilon.
\]
and so
\begin{equation} \label{eq:pf2}
A_{j\cdot} \pi_{\Omega_{< 1}} \hat{x} \geq b_j - A_{j\cdot}
\pi_{\Omega_{1}}\bfone - \epsilon
\end{equation}
By combining \eqnok{eq:pf1} and \eqnok{eq:pf2}, we obtain
\[ 
(b_j - A_{j\cdot}\pi_{\Omega_1}\bfone)(1-\alpha) >  (b_j - A_{j\cdot}\pi_{\Omega_1}\bfone) - \epsilon  
\]
Since $b_{j} - A_{j\cdot} \pi_{\Omega_1} \bfone \ge b_j - A_{j \cdot}
\tilde{x} > 0$, we can divide by $b_{j} - A_{j\cdot} \pi_{\Omega_1}
\bfone$ without changing signs to obtain
\begin{equation} \label{eq:pf3}
\frac{\epsilon}{b_j - A_{j\cdot}\pi_{\Omega_1}\bfone} > \alpha \;\;
\Rightarrow \;\;
b_j - A_{j\cdot}\pi_{\Omega_1}\bfone < \epsilon/\alpha.
\end{equation}
We have by using the definition of $\alpha$ that $b_j -
A_{j\cdot}\pi_{\Omega_1}\bfone \ge q(P)\ge \epsilon/\alpha$, since
$\pi_{\Omega}\bfone$ is an integral but infeasible point for (P). This
fact contradicts \eqnok{eq:pf3}, so we have proved feasibility of
$\tilde{x}$ for (P).

We now bound the difference between $c^* \tilde{x}$ and $c^*x^*$,
where $x^*$ is the optimal solution of (P). Since $\tilde{x}$ is
feasible, we have that $c^{T}x^* \leq c^{T}\tilde{x}$. For the upper
bound, we have
\[ 
c^{T}\tilde{x} - c^{T}x^* \leq (1-\alpha)^{-1}c^{T} \hat{x} - c^{T}x^*
\leq (1-\alpha)^{-1}(c^{T} \hat{x} - c^{T}x^*) \leq
\frac{\delta}{1-\alpha} c^{T}x^*.
\]
The first inequality follows from $c^{T}z \geq c^{T}
(\Pi_{[0,1]^{n}}z)$ since $c \geq 0$; the second inequality is from
$\alpha \in (0,1)$; and the third inequality follows from the fact
that $\hat{x}$ is a $(\epsilon, \delta)$ approximation.
\end{proof}

In our experiments, we set $\alpha= \epsilon/ q(P)$, which is computed
using the approximate $(\epsilon, \delta)$ optimal fractional
solution.

\subsection{Rounding for Packing}

A packing problem is a maximization linear program $P(A,b,c)$ where
$A,b,c\geq 0$ and $A \in \mathbb{Z}^{m \times n}$, $b \in
\mathbb{Z}^{m}$, and $c \in \mathbb{Z}^{n}$ along with bound
constraints $[0,1]$ on all variables. That is,
\[ 
\max_{u \in [0,1]^{m}} u^{T}b \quad \mbox{subject to} \;\;  A^{T}u \leq c. 
\qquad\qquad \mbox{[$P(A,b,c)$]}
\]
In this class of problems, we can assume without loss of generality
that $c \geq \bfone$.  The equality constrained formulation of this
problem is
\[ 
\max_{u \in [0,1]^{m}, z \in \R^{n}} u^{T}b \quad \mbox{subject to} \;\;  A^{T}u + z = c, \; z \geq 0.
\]
(We can set $z = \max(c-A^Tu,0)$ to obtain the equivalence.)

We use $A_{\cdot i}$ to denote the $i$th column of $A$ in the
discussion below.


\begin{lemma}
Let $P$ be a packing program. Let $\hat{u}$ be an
$(\epsilon, \delta)$-approximate solution of $P$, then there is a
$(0,\frac{\delta + \alpha}{1+\alpha})$-approximate solution
$\tilde{u}$ defined as
\[  \tilde{u} = \hat{u}/ (1+\alpha) \]
provided that $\hat{u} \in [0,1]^{m}$ where $\alpha \ge \epsilon / \left(
\min_{i=1,2,\dotsc,n} c_i \right)$.
\end{lemma}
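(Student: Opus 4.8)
The plan is to verify directly that $\tilde u = \hat u/(1+\alpha)$ meets the two requirements of Definition~\ref{def:epsdel}, adapted to this maximization (with $b$ playing the role of the objective vector): exact feasibility, and an objective gap of at most $(\delta+\alpha)/(1+\alpha)$ times the optimum. The two facts that make this work are that dividing $\hat u$ by $1+\alpha$ is just enough to absorb the $\epsilon$-infeasibility in $A^T\hat u\le c$, and that this same rescaling shrinks the nonnegative objective $b^Tu$ by only the controlled factor $1/(1+\alpha)$.

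First I would unpack the hypothesis. Writing the packing LP in the equality-constrained form $\max_{u\in[0,1]^m,\,z\ge 0} b^Tu$ subject to $A^Tu+z=c$, and taking $z=\max(c-A^T\hat u,0)$ as in the statement, the residual bound $\|A^T\hat u+z-c\|_\infty\le\epsilon$ collapses to the one-sided inequality $A^T\hat u\le c+\epsilon\bfone$. Similarly, the objective condition $|b^T\hat u-b^Tu^*|\le\delta\,b^Tu^*$ — where $b^Tu^*\ge 0$ because $c\ge\bfone$ makes $u=0$ feasible — yields in particular $b^T\hat u\ge(1-\delta)\,b^Tu^*$, with $u^*$ an optimal solution.

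Next I would check feasibility of $\tilde u$. Since $\hat u\in[0,1]^m$ and $\alpha\ge 0$, we get $\tilde u\in[0,1/(1+\alpha)]^m\subseteq[0,1]^m$, so the box constraints hold. Because $c>0$, the hypothesis $\alpha\ge\epsilon/\min_i c_i$ is equivalent to $\epsilon\bfone\le\alpha c$ componentwise, hence $c+\epsilon\bfone\le(1+\alpha)c$; dividing $A^T\hat u\le c+\epsilon\bfone$ by $1+\alpha>0$ then gives $A^T\tilde u\le c$. So $\tilde u$ is feasible, and in particular $b^T\tilde u\le b^Tu^*$.

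Finally I would combine feasibility with the lower bound on $b^T\hat u$. Since $b^T\tilde u\le b^Tu^*$,
\[
|b^Tu^*-b^T\tilde u| \;=\; b^Tu^*-\frac{b^T\hat u}{1+\alpha} \;\le\; b^Tu^*-\frac{1-\delta}{1+\alpha}\,b^Tu^* \;=\; \frac{\alpha+\delta}{1+\alpha}\,b^Tu^*,
\]
which is exactly the claimed bound, so $\tilde u$ is a $(0,(\delta+\alpha)/(1+\alpha))$-approximate solution. I do not expect a real obstacle: this is a short rescaling estimate paralleling the covering case. The only thing that needs care is tracking directions — the reformulation makes $A^T\hat u\le c+\epsilon\bfone$ (not the reverse) the relevant consequence of the $\ell_\infty$ residual, and since this is a maximization that the rescaling shrinks, one must use the one-sided estimate $b^T\hat u\ge(1-\delta)b^Tu^*$ rather than the symmetric inequality. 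As in the covering subsection, I would close by noting that $\alpha=\epsilon/\min_i c_i$ is the natural and computable choice.
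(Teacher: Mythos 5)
Your proposal is correct and follows essentially the same route as the paper's proof: the componentwise bound $\epsilon\bfone \le \alpha(\min_i c_i)\bfone \le \alpha c$ turns $A^T\hat{u} \le c + \epsilon\bfone$ into $A^T\tilde{u} \le c$ after rescaling, and the objective gap is bounded by the identical one-sided chain $b^Tu^* - \frac{1}{1+\alpha}b^T\hat{u} \le \frac{\delta+\alpha}{1+\alpha}b^Tu^*$. No substantive differences.
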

\begin{proof}
We observe first that $\tilde{u} \in [0,1]^m$. To prove that
$A^T\tilde{u} \le c$, note that since $\hat{u}$ is an
$(\epsilon,\delta)$-approximate solution, we have
\[
A^T\hat{u} \le c + \epsilon \bfone
\le c + \alpha \left( \min_{l=1,2,\dotsc,n} c_l \right) \bfone \le (1+\alpha) c,
\]
proving the claim.


Let $u_{*}$ be an optimal solution of $P(A,b,c)$. Since $\tilde{u}$ is
feasible and this is a maximization problem, we have $u_{*}^{T}b \geq
\tilde{u}^{T}b \geq 0$. For the other bound, we have
\[ 
u_{*}^{T}b - \tilde{u}^{T}b 
= u_{*}^{T}b - \frac{1}{1+\alpha} \hat{u}^{T}b
\leq  u_{*}^{T}b - \frac{1-\delta}{1+\alpha} u_{*}^{T}b 
= \frac{\delta + \alpha}{1+\alpha} u_{*}^{T}b,
\]
completing the proof.
\end{proof}

A quick examination of the proof suggests that we can take $\alpha :=
\left(\max_{i=1,2,\dotsc,n} \frac{A_{\cdot i}^T \hat{u} -
c_i}{c_i}\right)_{+}$, which is never larger than $\alpha$ as defined above. In
our experiments, we set $\alpha$ using this tighter bound and $\theta =
\frac{1}{k}$ in algorithm \ref{alg:set_packing_rounding_2}.  We note that the
algorithm is sensitive to the value of $\theta$. Any positive value of $\theta
k \geq 1$ will always return a valid independent set. The proofs in
\cite{Bansal2012} require that $\theta$ must be greater or equal to $1$, but we
found that $\theta = \frac{1}{k}$ works much better in practice.

\subsection{Rounding for Multiway-Cuts}

Since we enforce the simplex constraints in the SCD solve, every
solution obtained by our quadratic relaxation is automatically
feasible for our linear program.

\section{Linear Programming Condition Numbers}
\label{app:lp:conditioning}

In this section, we describe estimates of $(\delta_P,\delta_{D})$ in
detail for vertex cover, and sketch the ideas for estimating these
quantities for the other relaxations that we consider in this paper.

\subsection{Vertex Cover: The Bounds in Detail}

Consider vertex cover with a graph $G=(V,E)$, where $|V| = n$ and
$|E|=m$. The LP relaxation is as follows
\begin{equation} \label{eq:vc.again}
\min_{x \in \R_{+}^{n}} \bfone^Tx \quad \mbox{subject to} \;\; x_v +
x_w \geq 1 \text{ for all } (v,w) \in E \text{ and } x_v \leq 1 \text{
  for all } v \in V.
\end{equation}
The dual of this program is
\[ 
\max_{u \in \R_{+}^{m}, z \in \R^{+}} u^{T}\bfone - z^{T}\bfone \quad
\mbox{subject to} \;\; \sum_{e : e \ni v} u_e - z_{v} \leq 1 \text{
  for each } v \in V.
\]

\paragraph{Computing $\|d\|$.} 
Define $\|d\| = \max \{\|A\|_{F},\|b\|_2,\|c\|_2\} $ for this problem,
where $(A,b,c)$ are the data defining \eqnok{eq:vc.again}. We have
\[ 
\|A\|_{F} = \sqrt{2m + n}, \quad \|b\|_2 = \sqrt{m+n} \quad \|c\| = \sqrt{n}  
\]
Hence, $\|d\| = \sqrt{2m + n}$.


\paragraph*{Primal Bound.}

We define $x = \frac{2}{3} \bfone$, and figure how large a
perturbation $(\Delta A, \Delta b, \Delta c)$ is needed to problem
data $(A,b,c)$ to make this particular point infeasible. The norm of
this quantity will give a lower bound on the distance to
infeasibility.

By construction of $x$, we have that $Ax - b = \frac{1}{3} \bfone$.
For infeasibility with respect to one of the cover constraints, we
would need for some $i$ that 
\[
|(\Delta A)_{i \cdot} x  - \Delta b_i | \ge \frac13,
\]
which, given our definition of $x$, would require
\begin{equation} \label{eq:cc1}
\frac23 \sum_{j=1}^n | \Delta A_{ij} | + | \Delta b_i| > \frac13.
\end{equation}
We must therefore have that 
\[
\sum_{j=1}^n | \Delta A_{ij} | \ge \frac14 \;\; \mbox{and/or}
\;\;
 | \Delta b_i| > \frac16.
\]
In the first case, noting that 
\[ 
\frac{1}{4} n^{-1/2} = \min_{z \in [0,1]^{n}} \norm{z}_2  \quad \mbox{subject to} \;\;
z^{T} \bfone \geq \frac{1}{4},
\]
we would have that $\|\Delta A\|_F \ge \| (\Delta A)_{i \cdot} \|_2
\ge n^{-1/2}/4$. In the second case, we would have $\| \Delta b \|_2
\ge | \Delta b_i| \ge 1/6$. 

Suppose that the infeasibility happens instead with respect to one of
the $x \le \bfone$ constraints. A similar argument for the violated
constraint would lead to the same necessary condition \eqnok{eq:cc1}
and the same bounds.

In either case, assuming that $n \ge 3$, we have
\[
\| (\Delta A, \Delta b, \Delta c) \| \ge n^{-1/2}/4,
\]
so that 
\[ 
\delta_{P} \geq \norm{d}^{-1} n^{-1/2}/4 .
\]

\paragraph*{Dual Bound.} 

We consider here a fixed vector $(u,z) = 0$.  For infeasibility, we
would need $\Delta c_i<-1$ for some $i$, and therefore $\| \Delta d \|
\ge 1$.We thus have
\[ 
\delta_{D} \geq \norm{d}^{-1}   
\]

Putting the primal and dual bounds together, and using our bound on
$\|d\|$, we obtain
\[ 
\frac{1}{\delta_{P}\delta_{D}} = O( \|d\|^2 n^{1/2} ) = O( (m+n) n^{1/2} ).
\]

\subsection{Packing and Covering Programs}

Suppose we have a covering program with data $(A,b,c) \geq 0$, with
$[0,1]$ bound constraints on each variable. That is,
\[ 
\min_{x \in \R_{+}^{n}} c^{T}x \quad \mbox{subject to} \;\;  Ax \geq b, x \leq \bfone,
\]
its dual is a packing program:
\[ 
\max_{u \in \R_{+}^{m}, z \in \R_{+}^{n}} u^{T}b - z^{T}\bfone
\quad \mbox{subject to} \;\;  A^{T}u - z \leq c.
\]

Generalizing our argument above, we find a point that has the most
slack from each constraint. Defining the following measure of slack:
\[
s(A,b,c) = \max_{x \in \mathbb{R}_{+}^{n} : Ax \geq b, x \leq \bfone}
\min \set{\min_{i=1,\dots,n} 1 - x_i, \min_{j=1,\dots,m} b_j -
  A_{j\cdot} x},
\]
we can obtain a lower bound $\delta_{P} \geq \norm{d}^{-1} n^{-1/2}
s(A,b,c)/2$, as follows. Suppose that $x_S$ is the point that achieves
the maximum slack.  We need that one of the following conditions holds
for at least one constraint $i$: $\Delta A_{i \cdot } x_S > s(P)/2$ or
$| \Delta b_i| \ge s(P)/2$. Observe that
\[ 
\Delta A_{i \cdot } x_S \leq \|x_{S}\|_2 \|\Delta A_{i \cdot }\|_2
\leq n^{1/2} \|\Delta A_{i}\|_2.
\]
(The second inequality follows from $0 \leq x_{S} \leq \bfone$.) Thus,
in this case, $\|\Delta A_{i}\|_2 > s(A,b,c) n^{-1/2}/2$.  Using a similar
argument to the previous subsection, we have
\[
\delta_P \ge \|d\|^{-1} \| \Delta d\| \ge \|d\|^{-1} s(A,b,c) n^{-1/2}/2.
\]

Since $(u,z)=(0,0)$ is feasible for the dual, we have by a similar
argument to the previous subsection that infeasibility occurs only if
$| \Delta c_i| \ge c_i$ for at least one $i$. We therefore have
$\| \Delta d \| \ge  \min_{i=1,2,\dotsc,n} c_i$, so that 
\[
\delta_D \ge \|d\|^{-1} \min_{i=1,2,\dotsc,n} c_i.
\]

Putting the bounds on $\delta_P$ and $\delta_D$ together, we have
\[
\frac{1}{\delta_P \delta_D}  \le \|d\|^2 
\frac{1}{s(A,b,c) \min_{i=1,2,\dotsc,n} c_i} O(n^{1/2}).
\]

\section{Extended Experimental Results} \label{app:experiments}

In this section, we elaborate our discussion on the experimental results in
Section~\ref{sec:expt_cplex} and provide additional evidence to support our
claims. Figures~\ref{fig:app:cplex_lp} and \ref{fig:app:cplex_lp_default}
compare the performance of Thetis with \cplex-IP and \cplex-LP on all tested
instances of vertex cover, independent set, and multiway-cut. In all three
formulations, we used unit costs in the objective function. The results in
Figure~\ref{fig:app:cplex_lp_default} were obtained by using default tolerance
on \cplex-LP, while Figure~\ref{fig:app:cplex_lp} uses the same tolerance
setting as the main manuscript. 

\begin{figure}[h]
\centering
\footnotesize
\setlength{\tabcolsep}{3pt}
\begin{tabular}{ | c | c c c| c c c | c c c | }
\hline
{\bf VC} & \multicolumn{3}{c|}{ \cplex IP} & \multicolumn{3}{c|}{ \cplex LP}
& \multicolumn{3}{c|}{ Thetis } \\ \cline{2-10}
(min) & t (secs) & BFS & Gap(\%) & t (secs) & LP & RSol & t (secs) & LP & RSol \\ \hline
frb59-26-1 & - & 1475 & 0.7 & 2.48 & 767.0 & 1534 & 0.88 & 959.7 & 1532\\
frb59-26-2 & - & 1475 & 0.6 & 3.93 & 767.0 & 1534 & 0.86 & 979.7 & 1532\\
frb59-26-3 & - & 1475 & 0.5 & 4.42 & 767.0 & 1534 & 0.89 & 982.9 & 1533\\
frb59-26-4 & - & 1475 & 0.5 & 2.65 & 767.0 & 1534 & 0.89 & 983.6 & 1531\\
frb59-26-5 & - & 1475 & 0.5 & 2.68 & 767.0 & 1534 & 0.90 & 979.4 & 1532\\
Amazon & 85.5 & 1.60\e{5} & - & 24.8 & 1.50\e{5} & 2.04\e{5} & 2.97 & 1.50\e{5} & 1.97\e{5}\\
DBLP & 22.1 & 1.65\e{5} & - & 22.3 & 1.42\e{5} & 2.08\e{5} & 2.70 & 1.42\e{5} & 2.06\e{5}\\
Google+ & - & 1.06\e{5} & 0.01 & 40.1 & 1.00\e{5} & 1.31\e{5} & 4.47 & 1.00\e{5} & 1.27\e{5}\\
\hline
{\bf MC} & \multicolumn{3}{c|}{ \cplex IP} & \multicolumn{3}{c|}{ \cplex LP}
& \multicolumn{3}{c|}{ Thetis} \\ \cline{2-10}
(min) & t (secs) & BFS & Gap(\%) & t (secs) & LP & RSol & t (secs) & LP & RSol \\ \hline
frb59-26-1 & 72.3   & 346 & - & 312.2 & 346 & 346 & 5.86 & 352.3 & 349\\
frb59-26-2 & 561.1 & 254 & - & 302.9 & 254 & 254 & 5.82 & 262.3 & 254\\
frb59-26-3 & 27.7  & 367 & - & 311.6 & 367 & 367 & 5.86 & 387.7 & 367\\
frb59-26-4 & 65.4  & 265 & - & 317.1 & 265 & 265 & 5.80 & 275.7 & 265\\
frb59-26-5 & 553.9 & 377 & - & 319.2 & 377 & 377 & 5.88 & 381.0 & 377\\
Amazon & - & 12 & NA & - & - & - & 55.8 & 7.3 & 5\\
DBLP & - & 15 & NA & - & - & - & 63.8 & 11.7 & 5\\
Google+ & - & 6 & NA & - & - & - & 109.9 & 5.8 & 5\\
\hline
{\bf MIS} & \multicolumn{3}{c|}{ \cplex IP} & \multicolumn{3}{c|}{ \cplex LP} &
\multicolumn{3}{c|}{ Thetis} \\ \cline{2-10}
(max) & t (secs) & BFS & Gap(\%) & t (secs) & LP & RSol & t (secs) & LP & RSol \\ \hline
frb59-26-1 & - & 50 & 18.0 & 4.65 & 767 & 15 & 0.88 & 447.7 & 18\\
frb59-26-2 & - & 50 & 18.0 & 4.74 & 767 & 17 & 0.88 & 448.6 & 17\\
frb59-26-3 & - & 52 & 13.4 & 3.48 & 767 & 19 & 0.87 & 409.2 & 19\\
frb59-26-4 & - & 53 & 11.3 & 4.41 & 767 & 18 & 0.90 & 437.2 & 17\\
frb59-26-5 & - & 51 & 15.6 & 4.43 & 767 & 18 & 0.88 & 437.0 & 18\\
Amazon & 35.4 & 1.75\e{5} & - & 23.0 & 1.85\e{5} & 1.56\e{5} & 3.09 & 1.73\e{5} & 1.43\e{5}\\
DBLP & 17.3 & 1.52\e{5} & - & 23.2 & 1.75\e{5} & 1.41\e{5} & 2.72 & 1.66\e{5} & 1.34\e{5}\\
Google+ & - & 1.06\e{5} & 0.02 & 44.5 & 1.11\e{5} & 9.39\e{4} & 4.37 & 1.00\e{5} & 8.67\e{4}\\ \hline
\end{tabular}
\caption{Wall-clock time and quality of fractional and integral solutions for
three graph analysis problems using Thetis, \cplex-IP and \cplex-LP. Each code
was given a time limit of one hour, with `-' indicating a timeout.  BFS is the
objective value of the best integer feasible solution found by \cplex-IP.  The
gap is defined as (BFS$-$BB)/BFS where BB is the best known solution bound
found by \cplex-IP within the time limit.  A gap of `-' indicates that the
problem was solved to within $0.01\%$ accuracy and NA indicates that \cplex-IP
was unable to find a valid solution bound.  LP is the objective value of the LP
solution, and RSol is objective value of the rounded solution.}
\label{fig:app:cplex_lp}
\end{figure}


\paragraph*{Maximum Independent Set.} 

We observed that the rounded feasible solutions obtained using Thetis were of
comparable quality to those obtained by rounding the more accurate
solutions computed by \cplex-LP.  The integral solutions obtained from
\cplex-IP were only marginally better than that obtained by LP-rounding, but at
a cost of at least an order of magnitude more time. 

\paragraph*{Multiway Cuts.} 

The number of variables in the multiway-cut problem is $O((|E|+|V|) \times k)$
where $|E|$ is the number of edges, $|V|$ is the number of vertices and $k$ is
the number of terminals.  The terminals were chosen randomly to be in the same
connected component of the graph. All the test instances, excepting Google+,
were fully connected.  For Google+, 201949 (of 211186 vertices) were connected
to the terminals.  For all instances, including Google+, all codes were run on
\eqref{model:lp_multiway_cut} built using the entire graph.

We solved the QP-approximation of \eqref{model:lp_multiway_cut} using a
block-SCD method, which is variant of Algorithm \ref{alg:SCD}, in which an
update step modifies a block of co-ordinates of size $k$.  For the blocks
corresponding to variables $x_v,\ \forall v \in V$, we performed a projection
on to the $k$-dimensional simplex $\Delta_k$. The simplex projection was
necessary to ensure that the approximate LP solution is always feasible for
\eqref{model:lp_multiway_cut}.  We disabled presolve for Thetis to prevent the
simplex constraints from being eliminated or altered. We did not disable
presolve for \cplex-LP or \cplex-IP. 

Our results demonstrate that Thetis is much more scalable than both \cplex-IP
and \cplex-LP. Thetis was an order of magnitude faster than \cplex-LP on the
Bhoslib instances while generating solutions of comparable quality. Both Thetis
and \cplex-LP recovered the optimal solution on some of the instances. On the
SNAP instances, both \cplex-IP and \cplex-LP failed to complete within an hour
on any of the instances. \cplex-IP was able to generate feasible solutions
using its heuristics, but was able to unable to solve the root-node relaxation
on any of the SNAP instances.

\begin{figure}[h]
\centering
\footnotesize
\setlength{\tabcolsep}{3pt}
\begin{tabular}{ | c | c c c | c c c | c c c | }
\hline
{\bf VC} & \multicolumn{3}{c|}{ \cplex-IP} & \multicolumn{3}{c|}{ \cplex-LP
(default tolerances)} & \multicolumn{3}{c|}{ Thetis} \\ \cline{2-10}
(min) & t (secs) & BFS & Gap(\%) & t (secs) & LP & RSol & t (secs) & LP & RSol \\ \hline
frb59-26-1 & - & 1475 & 0.7 & 4.59 & 767.0 & 1534 & 0.88 & 959.7 & 1532\\
frb59-26-2 & - & 1475 & 0.6 & 4.67 & 767.0 & 1534 & 0.86 & 979.7 & 1532\\
frb59-26-3 & - & 1475 & 0.5 & 4.76 & 767.0 & 1534 & 0.89 & 982.9 & 1533\\
frb59-26-4 & - & 1475 & 0.5 & 4.90 & 767.0 & 1534 & 0.89 & 983.6 & 1531\\
frb59-26-5 & - & 1475 & 0.5 & 4.72 & 767.0 & 1534 & 0.90 & 979.4 & 1532\\
Amazon & 85.5 & 1.60\e{5} & - & 21.6 & 1.50\e{5} & 1.99\e{5} & 2.97 & 1.50\e{5} & 1.97\e{5}\\
DBLP & 22.1 & 1.65\e{5} & - & 23.7 & 1.42\e{5} & 2.07\e{5} & 2.70 & 1.42\e{5} & 2.06\e{5}\\
Google+ & - & 1.06\e{5} & 0.01 & 60.0 & 1.00\e{5} & 1.30\e{5} & 4.47 & 1.00\e{5} & 1.27\e{5}\\
\hline
{\bf MC} & \multicolumn{3}{c|}{ \cplex-IP} & \multicolumn{3}{c|}{ \cplex-LP (default tolerances) } & \multicolumn{3}{c|}{ Thetis ($\epsilon=0.1$)} \\ \cline{2-10}
(min) & t (secs) & BFS & Gap(\%) & t (secs) & LP & RSol & t (secs) & LP & RSol \\ \hline
frb59-26-1 & 72.3  & 346 & - & 397.9 & 346 & 346 & 5.86 & 352.3 & 349\\
frb59-26-2 & 561.1 & 254 & - & 348.1 & 254 & 254 & 5.82 & 262.3 & 254\\
frb59-26-3 & 27.7  & 367 & - & 386.6 & 367 & 367 & 5.86 & 387.7 & 367\\
frb59-26-4 & 65.4  & 265 & - & 418.9 & 265 & 265 & 5.80 & 275.7 & 265\\
frb59-26-5 & 553.9 & 377 & - & 409.6 & 377 & 377 & 5.88 & 381.0 & 377\\
Amazon & - & 12 & NA & - & - & - & 55.8 & 7.28 & 5\\
DBLP & - & 15 & NA & - & - & - & 63.8 & 11.70 & 5\\
Google+ & - & 6 & NA & - & - & - & 109.9 & 5.84 & 5\\
\hline
{\bf MIS } & \multicolumn{3}{c|}{ \cplex-IP} & \multicolumn{3}{c|}{ \cplex-LP
(default tolerances)} & \multicolumn{3}{c|}{ Thetis ($\epsilon=0.1$)} \\
\cline{2-10}
(max) & t (secs) & BFS & Gap(\%) & t (secs) & LP & RSol & t (secs) & LP & RSol \\ \hline
frb59-26-1 & - & 50 & 18.0 & 4.88 & 767 & 16 & 0.88 & 447.7 & 18\\
frb59-26-2 & - & 50 & 18.0 & 4.82 & 767 & 16 & 0.88 & 448.6 & 17\\
frb59-26-3 & - & 52 & 13.4 & 4.85 & 767 & 16 & 0.87 & 409.2 & 19\\
frb59-26-4 & - & 53 & 11.3 & 4.67 & 767 & 15 & 0.90 & 437.2 & 17\\
frb59-26-5 & - & 51 & 16.6 & 4.82 & 767 & 16 & 0.88 & 437.0 & 18\\
Amazon & 35.4 & 1.75\e{5} & - & 25.7 & 1.85\e{5} & 1.58\e{5} & 3.09 & 1.73\e{5} & 1.43\e{5}\\
DBLP & 17.3 & 1.52\e{5} & - & 24.0 & 1.75\e{5} & 1.41\e{5} & 2.72 & 1.66\e{5} & 1.34\e{5}\\
Google+ & - & 1.06\e{5} & 0.02 & 68.8 & 1.11\e{5} & 9.40\e{4} & 4.37 & 1.00\e{5} & 8.67\e{4}\\
\hline
\end{tabular}
\caption{Wall-clock time and quality of fractional and integral solutions for
three graph analysis problems using Thetis, \cplex-IP and \cplex-LP (run to
default tolerance). Each code was given a time limit of one hour, with `-'
indicating a timeout.  BFS is the objective value of the best integer feasible
solution found by \cplex-IP.  The gap is defined as (BFS$-$BB)/BFS where BB is
the best known solution bound found by \cplex-IP within the time limit.  A
gap of `-' indicates that the problem was solved to within $0.01\%$ accuracy
and NA indicates that \cplex-IP was unable to find a valid solution bound.  LP
is the objective value of the LP solution, and RSol is objective value of the
rounded solution.}
\label{fig:app:cplex_lp_default}
\end{figure}

\end{document}